\crefname{section}{Section}{Sections}
\crefname{subsection}{\S}{\S\S}
\theoremstyle{plain}
\newtheorem{lemma}{Lemma}[section]
\newtheorem{proposition}[lemma]{Proposition}
\newtheorem{corollary}[lemma]{Corollary}
\newtheorem{theorem}[lemma]{Theorem}
\newtheorem{conjecture}[lemma]{Conjecture}
\newtheorem{question}[lemma]{Question}
\theoremstyle{nonumberplain}
\theoremstyle{plain}
\newtheorem{claim}[lemma]{Claim}
\crefname{definition}{definition}{definitions}
\crefname{ex}{example}{examples}
\crefname{remark}{remark}{remarks}
\crefname{convention}{convention}{conventions}
\crefname{claim}{claim}{claims}
\crefname{conjecture}{conjecture}{conjectures}
\crefname{lemma}{lemma}{lemmas}
\crefname{proposition}{proposition}{propositions}
\crefname{question}{question}{questions}
\crefname{corollary}{corollary}{corollaries}
\crefname{theorem}{theorem}{theorems}
\crefname{assumption}{assumption}{Assumptions}
\crefname{equation}{}{}
\theoremstyle{nonumberplain}
\newtheorem{proof}{Proof}
\newcommand\bC{{\mathbb C}}
\newcommand\bQ{{\mathbb Q}}
\newcommand\bR{{\mathbb R}}
\newcommand\bS{{\mathbb S}}
\newcommand\bT{{\mathbb T}}
\newcommand\bZ{{\mathbb Z}}
\newcommand\cC{{\mathcal C}}
\newcommand\cO{{\mathcal O}}
\newcommand\cP{{\mathcal P}}
\def\polhk#1{\setbox0=\hbox{#1}{\ooalign{\hidewidth
    \lower1.5ex\hbox{`}\hidewidth\crcr\unhbox0}}}
\title{Compact Group Actions on Topological and Noncommutative Joins}
\author{Alexandru Chirvasitu\footnote{University at Buffalo, \url{achirvas@buffalo.edu} \hspace{2 pt} Partial support from NSF grant DMS 1565226 and the 2016 Simons Semester in Noncommutative Geometry through the Simons-Foundation grant 346300 and the Polish Government MNiSW 2015-2019 matching fund.}, Benjamin Passer\footnote{Technion-Israel Institute of Technology, \url{benjaminpas@technion.ac.il} \hspace{2 pt} Partial support from NSF grants DMS 1300280 and DMS 1363250, a Zuckerman Fellowship at the Technion, EU grant H2020-MSCA-RISE-2015-691246-QUANTUM DYNAMICS, and the 2016 Simons Semester in Noncommutative Geometry through the Simons-Foundation grant 346300 and the Polish Government MNiSW 2015-2019 matching fund.}}
\begin{document}

\date{}

\maketitle

\begin{abstract}
  We consider the Type 1 and Type 2 noncommutative Borsuk-Ulam
  conjectures of Baum, D\polhk{a}browski, and Hajac: there are no
  equivariant morphisms $A \to A \circledast_\delta H$ or
  $H \to A \circledast_\delta H$, respectively, when $H$ is a
  nontrivial compact quantum group acting freely on a unital
  $C^*$-algebra $A$. Here $A \circledast_\delta H$ denotes the
  equivariant noncommutative join of $A$ and $H$; this join procedure
  is a modification of the topological join that allows a free action
  of $H$ on $A$ to produce a free action of $H$ on
  $A \circledast_\delta H$. For the classical case $H = \cC(G)$, $G$ a
  compact group, we present a reduction of the Type 1
  conjecture and counterexamples to the Type 2 conjecture. We also present 
some examples and conditions under which the Type 2 conjecture does hold.
\end{abstract}

\noindent {\em Key words: join, compact group, Borsuk-Ulam theorem, compact quantum group, noncommutative topology}

\vspace{.5cm}

\noindent{MSC 2010: 20G42, 22C05, 46L85, 55S40}

\section{Introduction}\label{se.intro}

The join of two topological spaces $X$ and $Y$ is a quotient
$X * Y = X \times Y \times [0,1] / \sim$, where the equivalence
relation makes identifications at the endpoints of $[0, 1]$. Namely,
if $x_0 \in X$ is fixed, then all points $(x_0, y, 1)$ are identified,
and if $y_0 \in Y$ is fixed, all points $(x, y_0, 0)$ are
identified. In \cite{BDH}, the authors conjecture that the topological
join, and a $C^*$-algebraic variant thereof, may be used to greatly
generalize the Borsuk-Ulam theorem. Their topological conjecture is as
follows.

\begin{conjecture}\label{conj.BDH}
  Suppose $G$ is a nontrivial compact group acting freely and
  continuously on a compact Hausdorff space $X$. Then there is no
  equivariant, continuous map from $X*G$ to $X$, where $X*G$ is
  equipped with the diagonal action
  $[(x, g, t)] \cdot h = [(x\cdot h, gh, t)]$.
\end{conjecture}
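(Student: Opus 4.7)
The plan is to rephrase the problem on orbit spaces and then extract an obstruction via classifying maps to $BG$. The first observation is that the diagonal $G$-action on $X*G$ is automatically free, because $G$ acts freely on its $G$-factor by right translation; thus both $X\to X/G$ and $X*G\to(X*G)/G$ are principal $G$-bundles, and any hypothetical equivariant $f:X*G\to X$ descends to $\bar f:(X*G)/G\to X/G$ with $c_{X*G}\simeq c_X\circ\bar f$, where $c_Y:Y/G\to BG$ denotes the classifying map of the bundle $Y\to Y/G$. Choosing for $t\in(0,1)$ the canonical orbit representative $(xg^{-1},e,t)$ and matching with the collapsed slices ($X/G$ at $t=0$, a point at $t=1$) identifies $(X*G)/G$ with the mapping cone $\mathrm{Cone}(\pi)$ of the orbit projection $\pi:X\to X/G$.

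Under this identification, $c_{X*G}$ agrees with $c_X$ on the subspace $X/G\subset\mathrm{Cone}(\pi)$ and records on the attached cone $CX$ a canonical null-homotopy of $c_X\circ\pi:X\to BG$ coming from the tautological trivialization $\pi^{*}X\cong X\times G$. The factorization $c_{X*G}\simeq c_X\circ\bar f$ would then force that canonical null-homotopy to arise, by post-composition with $c_X$, from a null-homotopy of $\bar f|_{X/G}\circ\pi$ that takes place inside $X/G$ itself — a sharp restriction, since $X/G$ lacks the extra cone cell that $\mathrm{Cone}(\pi)$ uses to kill $\pi$.

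To close the argument I would invoke a cohomological obstruction such as the Fadell-Husseini index $\mathrm{ind}_G(Y):=\ker c_Y^{*}\subseteq H^{*}(BG;k)$. Any equivariant $X*G\to X$ forces $\mathrm{ind}_G(X)\subseteq\mathrm{ind}_G(X*G)$, so the goal reduces to the strict containment $\mathrm{ind}_G(X*G)\subsetneq\mathrm{ind}_G(X)$. The Puppe long exact sequence
\[
\cdots\to H^{k-1}(X)\to H^{k}(\mathrm{Cone}(\pi))\to H^{k}(X/G)\xrightarrow{\pi^{*}} H^{k}(X)\to\cdots
\]
together with the vanishing of $\pi^{*}\circ c_X^{*}$ in positive degrees (pullback of the trivial bundle) reduces this to producing a class $\alpha\in H^{*}(BG;k)$ whose corresponding transgression in the Serre spectral sequence of $X\to X/G\to BG$ is nontrivial. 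The hard part — and really the content of the conjecture — is doing this uniformly for all nontrivial compact $G$ and all free $G$-spaces $X$: I would expect the cases $G=\bZ/p$ (with $\bF_p$ coefficients) and $G$ a torus (with $\bQ$ coefficients) to fall out directly, then reduce general compact Lie $G$ to a nontrivial elementary abelian or toral subgroup in the style of Quillen, and finally pass to general compact $G$ by Peter-Weyl approximation.
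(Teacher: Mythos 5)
This statement is \Cref{conj.BDH}, which the paper states as an open conjecture (of Baum, D\polhk{a}browski, and Hajac) and does not prove; the paper only records that it holds when $G$ has a nontrivial torsion element \cite{volovikovindex} and, in \Cref{le.red}, reduces the general case to $G=\bZ_p=\varprojlim \bZ/p^n$. Your proposal does not close this gap either: you say explicitly that the ``hard part --- and really the content of the conjecture'' is producing the required nontrivial transgression uniformly, so what you have is a (reasonable, standard) strategy outline via the Fadell--Husseini index, not a proof. The preliminary reductions are fine in spirit --- the identification of $(X*G)/G$ with $\mathrm{Cone}(\pi)$ and the monotonicity $\mathrm{ind}_G(X)\subseteq\mathrm{ind}_G(X*G)$ are standard --- though already there you should worry that for non-Lie $G$ the orbit maps need not be locally trivial, so ``principal bundle'' and ``classifying map'' require care.

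The concrete failure is in your closing step. Reducing to ``a nontrivial elementary abelian or toral subgroup in the style of Quillen'' and then ``passing to general compact $G$ by Peter--Weyl approximation'' cannot work, because the only genuinely open case is exactly the one where no such subgroup exists. If $G$ has torsion, the conjecture is already known; otherwise \Cref{pr.Zp_sbgp} produces a copy of $\bZ_p$, and every nontrivial closed subgroup of $\bZ_p$ is again isomorphic to $\bZ_p$: it is totally disconnected (no torus) and torsion-free (no elementary abelian subgroup), and it has no nontrivial compact Lie quotient acting on $X$, so Peter--Weyl approximation of $G$ by Lie quotients does not transfer the free action. Worse, the cohomological obstruction you propose degenerates precisely here: the classifying-space cohomology of $\bZ_p$ with $\bF_p$ coefficients is the colimit of $H^*(B\bZ/p^n;\bF_p)$ along maps that annihilate the degree-two polynomial generators, so there is no nonnilpotent positive-degree class to transgress, and the index/ideal-valued argument that works for $\bZ/p$ and for tori gives nothing. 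This is exactly why the paper, instead of proving the conjecture, formulates \Cref{ques.solenoidal} and \Cref{ques.infiniteiteration} about $(p^k,p^r)$-equivariant sphere maps and the infinite join $\bZ_p^{*\infty}$, and why \Cref{eq.bartsch} and the asymptotics from \cite{meyer} are cited as only partial evidence. To turn your sketch into a proof you would need a new argument for $\bZ_p$ acting freely on a compact Hausdorff space, which is precisely the unresolved content of the conjecture.
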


The conjecture generalizes the Borsuk-Ulam theorem in that if
$G = \bZ/2$, $X = \mathbb{S}^{k}$, and the group action is given by
the antipodal map $x \mapsto -x$, then the conjecture reads as the
Borsuk-Ulam theorem itself: there is no odd function from $\bS^{k+1}$
to $\bS^{k}$. Of particular interest is the fact that the sphere
$\bS^{k}$ is an iterated join of $k + 1$ copies of $\bZ/2$, and the
antipodal action is compatible with this join process. As remarked in
\cite{BDH}, \Cref{conj.BDH} holds when $X = G * G$ due to
non-contractibility of $G$. Further, \cite[Corollary
3.1]{volovikovindex} shows that \Cref{conj.BDH} holds when $G$ has a
nontrivial torsion element (see also \cite[Proposition 4.1]{volovikov}
for a description of this proof).

If $A$ and $B$ are unital $C^*$-algebras, the \lq\lq
classical\rq\rq\hspace{0pt} \textit{noncommutative join}
\begin{equation}
  A \circledast B = \{f \in \cC([0,1], A\otimes B) \hspace{5pt} |\ f(0)\in \bC \otimes B,\ f(1)\in \bC \otimes A\}
\end{equation}
of \cite{joinfusion} and \cite{freeness} directly generalizes the join $X*Y$ of compact Hausdorff spaces. Here we have used $\otimes$ to refer to the minimal tensor product, as we do in the rest of the manuscript. However, if $(H, \Delta)$ is a compact quantum group with a free coaction $\delta: A \to A \otimes H$, questions about this coaction are more suited to the \textit{equivariant join}
\begin{equation}\label{eq:nc}
  A \circledast_\delta H =\{f \in \cC([0,1], A\otimes H) \hspace{5pt} |\ f(0)\in \bC \otimes H,\ f(1)\in \delta(A)\},
\end{equation}
which admits a free coaction $\delta_\Delta$ generated by
$\mathrm{id} \otimes \Delta: \cC([0, 1], A) \otimes H \to \cC([0, 1],
A) \otimes H \otimes H$ (see \cite[Theorem 1.5]{BDH}). Conjecture 2.3
of \cite{BDH}, which we repeat here, generalizes \Cref{conj.BDH} to
the quantum setting.

\begin{conjecture}\label{conj.BDH2}
  Suppose $A$ is a unital $C^*$-algebra with a free coaction of a
  nontrivial compact quantum group $(H, \Delta)$.

  Type 1. There does not exist a $(\delta, \delta_\Delta)$-equivariant
  $*$-homomorphism $A \to A \circledast_\delta H$.

  Type 2. There does not exist a $(\Delta, \delta_\Delta)$-equivariant
  $*$-homomorphism $H \to A \circledast_\delta H$.
\end{conjecture}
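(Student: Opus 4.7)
My plan is to reduce both halves of the conjecture first in the commutative case to a question about nullhomotopies of equivariant maps, and then to lift the resulting obstruction to the quantum setting via the Peter--Weyl structure of $H$.

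For Type 1, suppose $\varphi : A \to A \circledast_\delta H$ is $(\delta, \delta_\Delta)$-equivariant. Each element of $A \circledast_\delta H$ is a path $f \in \cC([0,1], A \otimes H)$ with $f(0) \in \bC \otimes H$ and $f(1) \in \delta(A)$, so the evaluations $\varphi_t := \mathrm{ev}_t \circ \varphi$ form a continuous family of $*$-homomorphisms with $\varphi_1$ landing in $\delta(A) \cong A$ and $\varphi_0$ landing in $H$. In the commutative case $A = \cC(X)$, $H = \cC(G)$, unwinding the equivariance and the endpoint conditions shows that existence of $\varphi$ is equivalent to a continuous family $k_t : X \to X$ with $k_0$ constant and $k_1$ a $G$-equivariant self-map of $X$; that is, some equivariant self-map of $X$ must be nullhomotopic in $\cC(X, X)$. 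I would rule this out using equivariant cohomology or the induced map on homotopy groups, invoking freeness of the $G$-action to force nontriviality of any equivariant self-map on a suitable geometric invariant.

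For Type 2, the analogous analysis of a hypothetical $\psi : H \to A \circledast_\delta H$ translates the problem into a continuous family $\ell_t : X \to G$ with $\ell_0$ constant and $\ell_1$ a $G$-equivariant map, i.e., a nullhomotopy of a global trivialization $X \to G$ of the principal $G$-bundle $X \to X/G$. The plan is to exhibit a homotopy invariant --- the induced map on $\pi_*(G)$, or a degree computed from the fundamental class of a closed $G$-orbit --- that must be nonzero for any equivariant $X \to G$, and hence cannot be killed by a homotopy to a constant. Transport to the general quantum setting would go through the class $[\psi_1] \in KK^H(H, A)$, with the claim that any such $\psi$ produces a $KK$-equivalence between $H$ and a subalgebra of $A$ that is incompatible with freeness of $\delta$.

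The main obstacle in both Types is locating a single equivariant nullhomotopy obstruction strong enough to survive every possible choice of $(A, H, \delta)$. For Type 1 the corresponding classical statement \Cref{conj.BDH} is open in full generality, and the best known partial results rely on torsion subgroups of $G$ or on special $X$; so passing to the quantum setting cannot be easier than the classical one, and the most promising route is a cohomological one via the Chern character of the regular corepresentation. For Type 2 the danger is that a cleverly chosen $X$ could carry a trivializing equivariant map $X \to G$ whose homotopy class is invisible to every standard invariant; then the argument must instead exploit a rigidity specific to the join construction at $t \in (0, 1)$, and whether such a rigidity is always present is, I expect, the decisive question.
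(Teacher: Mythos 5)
The statement you set out to prove is a conjecture, and the paper does not prove it --- on the contrary, it \emph{refutes} Type 2. \Cref{th.type2failure} exhibits a free coaction of $\cC(\bS^1)$ on $A = B(\cH)\otimes\cC(\bS^1)$ together with an equivariant unital $*$-homomorphism $\cC(\bS^1)\to A\circledast_\delta\cC(\bS^1)$, and a subsequent theorem produces separable nuclear counterexamples; \Cref{cor.type2failure} extends this to all tori. So any argument concluding Type 2 in general must contain an error, and yours does: the reduction of Type 2 to ``a continuous family $\ell_t : X \to G$ with $\ell_1$ a trivialization of the bundle $X \to X/G$'' silently assumes $A = \cC(X)$ is commutative. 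The conjecture concerns arbitrary unital $A$, and the correct general criterion (the paper's \Cref{le.tech}) is that an equivariant map $\cC(G)\to A\circledast_\delta\cC(G)$ exists if and only if there is an equivariant unital $*$-homomorphism $\varphi:\cC(G)\to A$ connected, within $\mathrm{Hom}(\cC(G),A)$, to a one-dimensional representation. When $A$ is noncommutative there may be no space $X$, no principal bundle, and no orbit on which to compute a degree; every homotopy invariant you propose lives on a spectrum that can simply be absent. Concretely, for $G=\bS^1$ and $A = B(\cH)\otimes\cC(\bS^1)$ the generator $\chi$ maps to the unitary $1\otimes\chi$, and Kuiper's theorem makes $U(A)$ path connected, so the required nullhomotopy exists and your hoped-for obstruction vanishes. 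Your closing worry that ``a cleverly chosen $X$ could carry a trivializing equivariant map invisible to every standard invariant'' identifies the right danger but places it in the wrong setting: the failure is forced not by a clever $X$ but by the absence of any $X$. (Type 2 does hold under extra hypotheses --- e.g.\ when $H$ contains a nontrivial finite-dimensional compact quantum subgroup, \Cref{th.disc} --- but that is a spectral/finite-order-unitary argument, not a degree argument.)

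For Type 1 your proposal contains a plan rather than a proof, and you correctly observe that the classical case \Cref{conj.BDH} is itself open outside the torsion case; the paper is in the same position, offering only a reduction to $G=\bZ_p$ (\Cref{le.red}) and open questions about equivariant maps out of iterated joins $\bZ_p^{*n}$. Note, though, that your Type 1 reduction to ``some equivariant self-map of $X$ is nullhomotopic'' again assumes $A$ commutative. The paper does prove that the classical subcase $H=\cC(G)$ of Type 1 is equivalent to \Cref{conj.BDH}, but the passage from noncommutative $A$ to a space requires taking the largest commutative quotient $A\twoheadrightarrow B=\cC(X)$, checking that $B$ is nontrivial, that the induced action is still free (via spectral subspaces), and that the proposed equivariant map descends because the codomain $B\circledast\cC(\bZ_p)$ is commutative --- none of which is automatic and all of which is missing from your sketch.
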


Note that a coaction $\delta: A \to A \otimes H$ of $(H, \Delta)$ is
\textit{free} when it satisfies the following condition from
\cite{ellwood}, which is written succinctly as \cite[(1.10)]{BDH}:
\begin{equation}\label{eq:cofree}
  \overline{\left\{ \sum\limits_\mathrm{finite} (a_i \otimes 1)\delta(b_i): a_i, b_i \in A \right\} } = A \otimes H.
\end{equation}
Further, if $A$ and $B$ are $C^*$-algebras with coactions $\delta_A$,
$\delta_B$ of $(H, \Delta)$, then a morphism $\phi: A \to B$ is
\textit{equivariant} if
\begin{equation}\label{eq.equivariance}
  \begin{tikzpicture}[auto,baseline=(current  bounding  box.center)]
    \path[anchor=base] (0,0) node (a) {$A$} +(3,.5) node (b) {$B$} +(6,0) node (bh) {$B \otimes H$} +(3,-.5) node (ah) {$A \otimes H$};
         \draw[->] (a) to[bend left=10] node[pos=.5,auto] {$\scriptstyle \phi$}  (b);
         \draw[->] (a) to[bend right=10] node[pos=.5,auto,swap] {$\scriptstyle \delta_A$} (ah);
         \draw[->] (b) to[bend left=10] node[pos=.5,auto] {$\scriptstyle \delta_B$} (bh);
         \draw[->] (ah) to[bend right=10] node[pos=.5,auto,swap] {$\scriptstyle \phi \otimes \mathrm{id}$} (bh);
  \end{tikzpicture}
\end{equation}
commutes.

As a consequence of \cite[Corollary 2.4]{saturated}, \Cref{conj.BDH2}
Type 1 holds when $H = \cC(G)$ for $G$ a compact group with a
nontrivial torsion element. Specifically, \cite[Corollary
2.4]{saturated} is a reduction to the topological case in
\cite{volovikovindex}, and some alternative (but still very much
related) proof strategies are described in the remainder of
\cite{saturated}. No counterexamples to Type 1 in its full generality
are known, but in \Cref{se.tp2}, we show that Type 2 counterexamples
exist for the \lq\lq quantum\rq\rq \hspace{0pt} group
$\cC(\mathbb{S}^1)$ acting on certain noncommutative $C^*$-algebras,
which can even be separable and nuclear. On the other hand, there are
also some restrictive conditions, applicable in both the classical
group and quantum group settings, under which the Type 2 conjecture
holds. In \Cref{se.tp1}, we deal exclusively with the classical case
$H = \cC(G)$ and present a reduction of the Type 1 conjecture in this
setting. We also consider related questions regarding eigenfunctions
in $\cC(\mathbb{Z}_p^{*n})$, including the limit as $n \to \infty$,
based upon questions in \cite{saturated} that attempt to use
eigenfunctions to generalize a strategy in \cite{volovikovindex}.


\section{Type 2: Counterexamples and Special Cases}\label{se.tp2}

In this section, we address \Cref{conj.BDH2} Type 2. For a coaction
$\delta:A\to A\otimes H$ and a simple (hence finite-dimensional)
$H$-comodule $\rho:V\to V\otimes H$, we let $A_\rho$ denote the {\it
  $\rho$-isotypic subspace} of $A$:
\begin{equation*}
  A_\rho = \text{ sum of the images of all }H\text{-}\text{equivariant maps }V\to A. 
\end{equation*}
The sum of all $A_\rho$ (as $\rho$ ranges over all simple
$H$-comodules) is a dense $*$-subalgebra of $A$, referred to in
\cite[$\S$1.2]{BDH} as the {\it Peter-Weyl subalgebra} $\cP_H(A)$ of
$A$. It can be recast, as in \cite{BDH}, as the preimage through
$\rho$ of $A\otimes_{\mathrm{alg}}\cO(H)$, where the latter symbol
denotes the unique dense Hopf $*$-subalgebra of $H$. Note that we have
\begin{equation*}
  \cO(H)=\bigoplus_\rho H_\rho,
\end{equation*}
where $H$ coacts on itself on the right via the comultiplication
$\Delta:H\to H\otimes H$. Equivalent formulations of freeness were
studied by the authors of \cite{freeness}; below we present a special
case of their results, with a proof for completeness.

 \begin{proposition}\label{prop.saturation}(special case of \cite[Theorem 0.4]{freeness})
   A coaction $\delta:A\to A\otimes H$ is free if and only if it is
   saturated, i.e. for every simple $H$-comodule $\rho$, the unit
   $1 \in A$ belongs to $\overline{A_\rho^* A_\rho}$.
\end{proposition}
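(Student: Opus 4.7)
The plan is to address the two directions separately, exploiting the Peter-Weyl decompositions $\cP_H(A) = \bigoplus_\rho A_\rho$ (dense in $A$) and $\cO(H) = \bigoplus_\rho H_\rho$ (dense in $H$). I fix a simple comodule $V$ of type $\rho$ with basis $v_1, \ldots, v_n$, so that $\rho(v_j) = \sum_i v_i \otimes u_{ij}$ for matrix coefficients $u_{ij}$ spanning $H_\rho$, assembled into a unitary corepresentation.

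For the $(\Leftarrow)$ direction I would observe that the image of the canonical map $\mathrm{can}: A \otimes A \to A \otimes H$, $a \otimes b \mapsto (a \otimes 1)\delta(b)$, is a left $A \otimes 1$-submodule containing $A \otimes 1$ (since $\delta(1) = 1 \otimes 1$). It therefore suffices to verify $1 \otimes H_\rho \subseteq \overline{\mathrm{im}(\mathrm{can})}$ for every simple $\rho$: this pushes through density of $\bigoplus_\rho H_\rho$ in $H$ and left multiplication by $A \otimes 1$ to yield $A \otimes H \subseteq \overline{\mathrm{im}(\mathrm{can})}$. To produce the required approximants, for equivariant maps $\phi_\alpha, \psi_\alpha: V \to A$ I would compute
\begin{equation*}
\sum_\alpha (\phi_\alpha(v_j)^* \otimes 1)\delta(\psi_\alpha(v_k)) = \sum_{\alpha, i} \phi_\alpha(v_j)^* \psi_\alpha(v_i) \otimes u_{ik},
\end{equation*}
and arrange the $\phi_\alpha, \psi_\alpha$ so that $\sum_\alpha \phi_\alpha(v_j)^* \psi_\alpha(v_i) \approx \delta_{ij}\cdot 1$; the displayed sum then converges to $1 \otimes u_{jk}$, hitting every matrix coefficient.

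For the $(\Rightarrow)$ direction, given freeness I would approximate $1 \otimes u_{pq}$ by sums $\sum_k (a_k \otimes 1)\delta(b_k)$. Slicing both sides by the functional $(\mathrm{id} \otimes h(\,\cdot\, u_{rs}^*))$, where $h$ is the Haar state on $H$, selects the $\rho$-isotypic component on the $H$-side via Schur orthogonality and leaves an expression inside $A \cdot A_\rho$ approximating a scalar. A further projection of the first factors onto $A_\rho^*$, again implemented via the coaction $\delta$ and the Haar state, would then extract the required saturation formula.

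The main obstacle is the upgrade from the scalar hypothesis $1 \in \overline{A_\rho^* A_\rho}$ to the matrix statement $\sum_\alpha \phi_\alpha(v_j)^* \psi_\alpha(v_i) \approx \delta_{ij}\cdot 1$ needed in the $(\Leftarrow)$ step: a priori the saturation condition gives only a single scalar identity, whereas the computation requires every entry of an $n \times n$ matrix simultaneously. The resolution I anticipate is to exploit the natural identification $A_\rho \cong \mathrm{Hom}^H(V, A) \otimes V$, so that equivariant intertwiners of $V$ reshuffle basis indices, and to combine this with unitarity of $(u_{ij})$ and Schur's lemma to propagate one instance of scalar saturation across every entry of the matrix. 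Once this technical point is handled, both directions reduce to finite-dimensional bookkeeping on each isotypic block.
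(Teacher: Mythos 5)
Your skeleton coincides with the paper's own proof: both directions run through the Peter--Weyl decompositions $\cP_H(A)=\bigoplus_\rho A_\rho$ and $\cO(H)=\bigoplus_\rho H_\rho$; your $(\Rightarrow)$ argument (slice with the Haar state to isolate the $H_\rho$-leg, then project the resulting element of $\overline{A A_\rho}$ onto the correct isotypic piece of $A$) is the paper's argument, where the last projection is most cleanly phrased as the conditional expectation onto $A_1$ together with the observation that $A_\eta^*A_\rho$ has trivial isotypic component only when $\eta=\rho$; and your $(\Leftarrow)$ reduction of freeness to $1\otimes H_\rho\subseteq\overline{\mathrm{im}(\mathrm{can})}$ via the left $A\otimes 1$-module structure is also how the paper proceeds. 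Up to that point the proposal is sound.

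The genuine gap is the step you flag yourself, and the resolution you anticipate does not work. To reach $1\otimes u_{jk}$ you need a family with $\sum_\alpha\phi_\alpha(v_j)^*\psi_\alpha(v_i)\approx\delta_{ij}\cdot 1$ for all $i,j$ simultaneously, while saturation only provides $x_s,y_s\in A_\rho$ with $\sum_s x_s^*y_s\approx 1$. You propose to propagate the scalar identity across matrix entries using ``equivariant intertwiners of $V$'' that ``reshuffle basis indices''; but $V$ is simple, so Schur's lemma gives $\mathrm{End}^H(V)=\bC$ and there are no non-scalar equivariant reshufflings, while composing $\psi_\alpha$ with a non-equivariant endomorphism of $V$ destroys equivariance. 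The obstruction is real: taking $A=H$ with the regular coaction and $\phi=\psi:v_j\mapsto u_{1j}$ gives $\sum_l\phi(v_l)^*\psi(v_l)=1$ while the matrix $\left(\phi(v_j)^*\psi(v_i)\right)_{ij}$ is a rank-one projection, so no transformation of a fixed family can produce the identity matrix --- one must manufacture \emph{additional} intertwiners. The standard repair applies $(\mathrm{id}\otimes h)\circ\delta$ to $\sum_s x_s^*y_s\approx 1$ and uses the orthogonality relations to convert scalar saturation into fullness of the multiplicity module $\mathrm{Hom}^H(V,A)$ over the fixed-point algebra, from which a family satisfying the matrix identity is assembled; alternatively one simply invokes \cite[Theorem 0.4]{freeness}. (In fairness, the paper compresses exactly this point into the assertion that $A_\rho\otimes H_\rho$ is contained in $\delta(A_\rho)$, which read literally fails for $\dim\rho>1$ by a dimension count; the crux is the same in both texts, but your proposed mechanism for closing it is the part that would fail.)
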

\begin{proof} {\bf ($\Rightarrow$)} As in \cite[Definition
  0.1]{freeness}, freeness means that the linear span of elements of
  the form
  \begin{equation*}
    (x\otimes 1)\delta(y),\ x,y\in A
  \end{equation*}
  is dense in $A\otimes H$. It suffices to let $x,y$ range over the
  dense subalgebra $\cP_H(A) = \bigoplus_\rho A_\rho$ instead, and
  since we have
  $\delta(\cP_H(A))\subset \bigoplus_\rho A_\rho\otimes H_\rho$, it follows that the linear span
\begin{equation}\label{eq:eta_ro}
  \sum_{\eta,\rho} A_\eta^* A_\rho\otimes H_\rho
\end{equation}
is dense in $A\otimes H$.

Since the subspaces $H_\rho$ and $H_{\rho'}$ of $H$ are orthogonal
with respect to the Haar state $h:H\to \bC$ when $\rho\ne \rho'$, any
element in $A\otimes H_\rho$ can be approximated arbitrarily well by
elements in
\begin{equation*}
  \sum_\eta A_\eta^* A_\rho\otimes H_\rho
\end{equation*}
alone (i.e., in \Cref{eq:eta_ro} we fix $\rho$ and allow only $\eta$
to vary). In particular, $1\in A$ is in the closure of
$\sum_\eta A_\eta^* A_\rho$. The conclusion now follows from the fact
that for $\eta\ne\rho$, the product $A_\eta^* A_\rho$ is contained in
the sum of all $A_\mu$, $\mu\ne 1$ (the trivial $H$-comodule), whereas
$1$ is in the $H$-fixed subspace
$A_1 = \{a\in A\ |\ \delta(a) = a\otimes 1\}$.

{\bf ($\Leftarrow$)} Suppose that for every simple
$H$-comodule $\rho$, the unit $1 \in A$ belongs to
$\overline{A_\rho^* A_\rho}$. This implies that for every $\rho$,
$1\otimes H_\rho$ is in the closure of $A_\rho^*A_\rho\otimes H_\rho$,
and hence $A\otimes H_\rho$ is in the closure of
$AA_\rho\otimes H_\rho$. The conclusion that the action is free then follows from the observation that for
every simple $H$-comodule $\rho$, $A_\rho\otimes H_\rho$ is contained
in $\delta(A_\rho) \subset \delta(A)$.
\end{proof}

When $(H, \Delta)$ is the Hopf $C^*$-algebra $\cC(G)$ of a compact
group $G$ with comultiplication dual to the multiplication of $G$, see
also \cite[Definition 5.2, Theorem 5.10]{phillips}. We will henceforth
focus almost exclusively on the case $H = \cC(G)$, where equivariance
in the sense of \Cref{eq.equivariance} is equivalent to the usual
$G$-equivariance of morphisms. Further, in this setting it is known
from \cite{BDH} that there is no distinction between the classical
join $A \circledast \cC(G)$ and the equivariant join
$A \circledast_\delta \cC(G)$. Indeed, applying the map
\begin{equation}\label{eq.2joins}
  \begin{tikzpicture}[auto,baseline=(current  bounding  box.center)]
    \path[anchor=base] (0,0) node (1) {$A\otimes \cC(G)$} +(4,0) node (2) {$A\otimes \cC(G)\otimes \cC(G)$} +(8,0) node (3) {$A\otimes \cC(G)$};
         \draw[->] (1) to node[pos=.5,auto] {$\scriptstyle \delta\otimes\mathrm{id}_{\cC(G)}$}  (2);
         \draw[->] (2) to node[pos=.5,auto] {$\scriptstyle \mathrm{id}_A\otimes\mathrm{mult}$}  (3);
  \end{tikzpicture}
\end{equation}
pointwise on $\cC([0,1], A \otimes \cC(G))$ identifies the subspace $A \circledast \cC(G)$ with the subspace $A\circledast_\delta \cC(G)$, intertwining the diagonal $G$-action on the left hand side and the right-tensorand regular action on the right hand side.

When a compact group $G$ acts freely on a compact Hausdorff space $X$, any orbit of $X$ gives an equivariant embedding $G \hookrightarrow X$. However, there is no analogous phenomenon in the $C^*$-algebraic setting: a simple $C^*$-algebra $A$ may have a free $\cC(G)$-coaction even though $A$ can have no quotient isomorphic to $\cC(G)$. The embedding $G \hookrightarrow X$ is exactly why  \Cref{conj.BDH} was not split into two types, and we will exploit this difference to produce counterexamples to \Cref{conj.BDH2} Type 2. First, note that when $H = \cC(G)$ is classical, the conjecture may be rephrased in a way that avoids the join entirely.

\begin{lemma}\label{le.tech}
  Let $G$ be a compact group acting on a unital $C^*$-algebra $A$. There is an equivariant unital $*$-homomorphism $\cC(G) \to A \circledast \cC(G) \cong A \circledast_\delta \cC(G)$ if and only if both conditions hold:
  \begin{itemize}
    \item there is a $G$-equivariant unital $*$-homomorphism $\varphi:\cC(G)\to A$, and 
    \item $\varphi_1=\varphi$ can be connected to a one-dimensional representation $\varphi_0:\cC(G)\to \bC\subset A$ through a path $\varphi_t$, $t\in [0,1]$ in $\mathrm{Hom}(\cC(G),A)$.
  \end{itemize}
Further, the first condition guarantees that the associated coaction $\delta: A \to A \otimes \cC(G)$ is free.
\end{lemma}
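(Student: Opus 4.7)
The plan is to work on the equivariant side $A \circledast_\delta \cC(G)$ of the isomorphism \Cref{eq.2joins}, where the $G$-action is simply $\id_A \otimes$(right translation) on $A \otimes \cC(G)$. The first step is to establish a natural bijection between arbitrary unital $*$-homomorphisms $\varphi: \cC(G) \to A$ (with no equivariance assumed) and equivariant unital $*$-homomorphisms $\psi: \cC(G) \to A \otimes \cC(G)$, given by $\psi = (\varphi \otimes \id_{\cC(G)}) \circ \Delta$ in one direction and $\varphi = (\id_A \otimes \epsilon) \circ \psi$ (where $\epsilon$ is the counit of $\cC(G)$) in the other. Equivariance of $(\varphi \otimes \id)\Delta$ follows from coassociativity, and $(\id \otimes \epsilon)\Delta = \id$ together with equivariance of $\psi$ show that the two assignments are mutually inverse.

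The second step is to parse what an equivariant unital $*$-homomorphism $\Psi: \cC(G) \to A \circledast_\delta \cC(G)$ is: evaluation at each $t \in [0,1]$ supplies a norm-continuous family $\psi_t: \cC(G) \to A \otimes \cC(G)$ of equivariant unital $*$-homomorphisms satisfying $\psi_0(\cC(G)) \subset \bC \otimes \cC(G)$ and $\psi_1(\cC(G)) \subset \delta(A)$. Transporting through the bijection of the first step produces a norm-continuous path $t \mapsto \varphi_t$ in $\mathrm{Hom}(\cC(G), A)$, at which point one translates the endpoints. The condition on $\psi_0$ forces $\varphi_0 = (\id_A \otimes \epsilon) \psi_0$ to take values in $\bC \cdot 1_A$, so $\varphi_0$ is a character of $\cC(G)$ composed with the unit inclusion $\bC \hookrightarrow A$. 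The condition $\psi_1(\cC(G)) \subset \delta(A)$ says that for each $f$ one has $\psi_1(f) = \delta(a_f)$ for some $a_f \in A$; applying $\id_A \otimes \epsilon$ identifies $a_f$ with $\varphi_1(f)$, whereupon the condition becomes $(\varphi_1 \otimes \id) \circ \Delta = \delta \circ \varphi_1$, which is precisely the $G$-equivariance of $\varphi_1$.

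The last clause, about freeness, is handled by \Cref{prop.saturation}: given an equivariant unital $*$-homomorphism $\varphi: \cC(G) \to A$ and any irreducible unitary representation $\rho$ of $G$ with matrix coefficients $\rho_{ij} \in \cC(G)_\rho$, the orthogonality identity $\sum_{i,j} \rho_{ij}^* \rho_{ij} = (\dim \rho) \cdot 1_{\cC(G)}$ pushes forward to $1_A = (\dim \rho)^{-1}\sum_{i,j} \varphi(\rho_{ij})^* \varphi(\rho_{ij}) \in A_\rho^* A_\rho$, because equivariance carries $\cC(G)_\rho$ into $A_\rho$. Then \Cref{prop.saturation} gives freeness of $\delta$.

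\textbf{Main obstacle.} The bulk of this is bookkeeping in the Hopf-algebraic formalism; the one genuinely delicate maneuver is the translation of the right endpoint condition $\psi_1(\cC(G)) \subset \delta(A)$ into the equivariance of $\varphi_1$, which hinges on recognizing $\delta(A)$ as the image of the coaction inside $A \otimes \cC(G)$ and exploiting the counit identity $(\id \otimes \epsilon) \circ \delta = \id_A$.
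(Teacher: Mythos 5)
Your argument is correct, and the core correspondence you set up is the same one the paper uses, just phrased on the other side of the isomorphism \Cref{eq.2joins}: the paper works with the classical join and extracts $\varphi_t = \mathrm{ev}_e \circ \psi_t$, which under $A \otimes \cC(G) \cong \cC(G,A)$ is exactly your $(\id_A \otimes \varepsilon)\circ\psi_t$, and its explicit converse formula $\psi(f)(t,g) = g^{-1}\triangleright \varphi_t(f(\bullet g))$ is the classical-join avatar of your $(\varphi_t \otimes \id)\circ\Delta$. Your packaging is cleaner in that the counit/comultiplication bijection makes the two directions of the equivalence manifestly inverse to one another and reduces both endpoint conditions to one-line computations, whereas the paper verifies the two implications separately. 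The genuinely different ingredient is the freeness claim: the paper proves the Ellwood condition \Cref{eq:cofree} directly, by first noting that $\Delta$ is a free coaction of $\cC(G)$ on itself (via Stone--Weierstrass) and then pushing the resulting approximations through $\varphi \otimes \id$; you instead verify the saturation criterion of \Cref{prop.saturation} exactly (not merely up to closure), using the orthogonality identity $\sum_{i,j}\rho_{ij}^*\rho_{ij} = (\dim\rho)\cdot 1$ for matrix coefficients of an irreducible unitary representation together with the fact that equivariance carries $\cC(G)_\rho$ into $A_\rho$. Both routes are valid; yours is shorter given that \Cref{prop.saturation} is already available, while the paper's is self-contained at the level of the Ellwood condition. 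The only point worth making explicit in a final write-up is the continuity bookkeeping: your bijection is implemented by composition with fixed $*$-homomorphisms, so it preserves point-norm continuity of the path, which is what is needed for $f \mapsto (t\mapsto\psi_t(f))$ to define an element of $\cC([0,1], A\otimes\cC(G))$ in the converse direction.
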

\begin{proof}
If there is an equivariant map $\psi: \cC(G) \to A \circledast \cC(G)$, then evaluation at any $t \in [0, 1]$ produces equivariant maps $\psi_t: \cC(G) \to A \otimes \cC(G) \cong \cC(G, A)$. From the boundary conditions of the (classical) join, $\psi_1$ maps to constant $A$-valued functions, and $\psi_0$ maps to $\cC(G)$. The equivariant map $\varphi = \mathrm{ev}_{e} \circ \psi_1$ is then connected within $\mathrm{Hom}(\cC(G), A)$ via $\mathrm{ev}_{e} \circ \psi_{t}$ to a one-dimensional representation, $\mathrm{ev}_e \circ \psi_0$.

If instead we assume that the conditions hold, we consider the two conclusions separately.

\vspace{.07 in}

{\bf 1: Freeness.} 
The comultiplication $\Delta$ on $\cC(G)$ defines a free coaction of $\cC(G)$ on itself, as the closure of finite sums in \Cref{eq:cofree} is actually a closed $*$-subalgebra of $\cC(G) \otimes \cC(G) \cong \cC(G \times G)$ on which the Stone-Weierstrass theorem applies (see \cite{freeness}). Moreover, we have a unital $*$-homomorphism $\phi: \cC(G) \to A$ that is $G$-equivariant, so it satisfies the coaction equivariance identity $(\phi \otimes \mathrm{id}) \circ \Delta = \delta \circ \phi$. Fix $\varepsilon > 0$, $a \in A \setminus \{0\}$, $f \in \cC(G)$, and finitely many elements $h_i, k_i \in \cC(G)$, $1 \leq i \leq m$, so that
\begin{equation*} 
\left|\left| \sum\limits_{i=1}^m (h_i \otimes 1)\Delta(k_i) - 1 \otimes f \right|\right| < \frac{\varepsilon}{||a||}
\end{equation*}
by freeness of the comultiplication $\Delta$. Applying $\phi \otimes \mathrm{id}$ and then left multiplying by $a \otimes 1$ yields
\begin{equation*}
\left|\left| \sum\limits_{i=1}^m (\phi(h_i) \otimes 1)\delta(\phi(k_i)) - 1 \otimes f \right|\right| < \frac{\varepsilon}{||a||} \implies \left|\left| \sum\limits_{i=1}^m (a \phi(h_i) \otimes 1)\delta(\phi(k_i)) - a \otimes f \right|\right| < \varepsilon,
\end{equation*}
so the closure in \Cref{eq:cofree} includes any $a \otimes f$. The closed span of such elements is then all of $A \otimes \cC(G)$.

\vspace{.07 in}

{\bf 2: Existence of map into the join.} Recasting the join in an equivalent form
\begin{equation}\label{eq:join_bis}
  A \circledast \cC(G) \cong \{f \in \cC([0,1] \times G, A)\ |\ f|_{\{0\}\times G}\text{ takes values in }\bC\subseteq A,\ f|_{\{1\}\times G}\text{ is constant on }G\},
\end{equation}
we can define $\psi:\cC(G)\to A \circledast \cC(G)$ so that it sends $f\in \cC(G)$ to the function
\begin{equation*}
  \psi(f):[0,1]\times G\to A,\ \psi(f)(t,g) =  g^{-1}\triangleright \varphi_t(f(\bullet g)),
\end{equation*}
where $f(\bullet g)$ is the function $x\mapsto f(xg)$ and $\triangleright$ denotes the group action. It is now immediate to check that 
\begin{itemize}
  \item $\psi(f)|_{\{0\}\times G}$ is in $\cC(G)\subset A\otimes \cC(G)$, as $\varphi_0$ was assumed to be a one-dimensional representation;
  \item $\psi(f)|_{\{1\}\times G}$ is constant on $G$ and hence represents a single element of $A\subset A\otimes \cC(G)$, as $\varphi=\varphi_1:\cC(G)\to A$ was assumed to be equivariant;
  \item $\psi$ is $G$-equivariant with respect to the $G$-action on $\cC([0,1]\times G,A)$ defined by
    \begin{equation*}
      (h\triangleright \zeta)(t,g) = h\triangleright \zeta(t,gh). 
    \end{equation*}
\end{itemize}
In other words, $\psi$ is the desired equivariant map $\cC(G)\to A \circledast \cC(G)$. 
\end{proof}

Equipped with \Cref{le.tech}, we can find counterexamples to the Type 2 conjecture.

\begin{theorem}\label{th.type2failure}
Let $\mathcal{H}$ be an infinite-dimensional, separable Hilbert space and set $A = B(\mathcal{H}) \otimes \cC(\mathbb{S}^1)$. If $\Delta: \cC(\mathbb{S}^1) \to \cC(\mathbb{S}^1) \otimes \cC(\mathbb{S}^1)$ denotes the comultiplication, then define a coaction $\delta: A \to A \otimes \cC(\mathbb{S}^1)$ by $\delta = \mathrm{id} \otimes \Delta$. This coaction is free, and there is an equivariant, unital $*$-homomorphism $\psi: \cC(\mathbb{S}^1) \to A \circledast_\delta  \cC(\mathbb{S}^1)$, so \Cref{conj.BDH2} Type 2 fails.
\end{theorem}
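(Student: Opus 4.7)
The plan is to invoke \Cref{le.tech}: it suffices to produce an $\mathbb{S}^1$-equivariant unital $*$-homomorphism $\varphi_1:\cC(\mathbb{S}^1)\to A$ together with a continuous path $\{\varphi_t\}_{t\in[0,1]}$ in $\mathrm{Hom}(\cC(\mathbb{S}^1),A)$ ending at a one-dimensional representation $\varphi_0$. The first bullet of \Cref{le.tech} then delivers freeness of $\delta$ automatically, and the two bullets together yield the equivariant map into the join. For $\varphi_1$ I take the $*$-homomorphism sending the canonical unitary generator $u\in\cC(\mathbb{S}^1)$ to $1_{B(\mathcal{H})}\otimes u\in A$; since $\Delta(u)=u\otimes u$, one computes $\delta(\varphi_1(u))=1\otimes u\otimes u=(\varphi_1\otimes\mathrm{id})\Delta(u)$, so $\varphi_1$ is equivariant.

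Unital $*$-homomorphisms $\cC(\mathbb{S}^1)\to A$ are classified by their value on $u$, which ranges over the unitary group $U(A)$. Under the natural identification $A\cong\cC(\mathbb{S}^1,B(\mathcal{H}))$, this set of unitaries coincides with the space of norm-continuous maps $\mathbb{S}^1\to U(\mathcal{H})$. In this picture $\varphi_1$ corresponds to the loop $z\mapsto zI_{\mathcal{H}}$, while the character $\varphi_0:u\mapsto 1\in\bC\subset A$ corresponds to the constant loop $z\mapsto I_{\mathcal{H}}$. The required path $\{\varphi_t\}$ therefore exists if and only if these two loops are norm-homotopic in $U(\mathcal{H})$.

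The key input is Kuiper's theorem: for a separable infinite-dimensional Hilbert space $\mathcal{H}$, the norm unitary group $U(\mathcal{H})$ is contractible. Consequently any two continuous maps $\mathbb{S}^1\to U(\mathcal{H})$ are homotopic, and in particular $z\mapsto zI_{\mathcal{H}}$ deforms to the constant loop $z\mapsto I_{\mathcal{H}}$. This yields a continuous path of unitaries in $A$ from $1\otimes u$ to $1\otimes 1$, hence the desired path $\{\varphi_t\}$; \Cref{le.tech} then assembles these ingredients into an equivariant unital $*$-homomorphism $\cC(\mathbb{S}^1)\to A\circledast_\delta\cC(\mathbb{S}^1)$, contradicting \Cref{conj.BDH2} Type 2. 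The only substantive ingredient is Kuiper's theorem; everything else amounts to compatibilities that \Cref{le.tech} has already absorbed.
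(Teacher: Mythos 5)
Your proposal is correct and follows essentially the same route as the paper: reduce to \Cref{le.tech}, take $\varphi_1(\chi)=1\otimes\chi$, identify $U(A)$ with $\cC(\mathbb{S}^1,U(\mathcal{H}))$, and use Kuiper's theorem to contract the loop $z\mapsto zI_{\mathcal{H}}$ to the constant loop, producing the path $\varphi_t$. The only cosmetic difference is that you invoke contractibility of $U(\mathcal{H})$ where the paper cites simple connectedness; both are Kuiper's theorem and both suffice.
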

\begin{proof}
  First, there is an equivariant map
  $\varphi = \varphi_1: \cC(\mathbb{S}^1) \to A$ defined by
  $\phi(f) = 1 \otimes f$. Because $\cC(\mathbb{S}^1)$ is the
  universal $C^*$-algebra generated by a single unitary character
  $\chi: z \mapsto z$, the map $\varphi_1$ is determined by
  $\varphi_1(\chi) = 1 \otimes \chi$. The unitary group
  $U(\mathcal{H})$ with the norm topology is simply connected by
  Kuiper's theorem (\cite{kuiper}), so the unitary group
  $U(A) \cong U(\cC(\mathbb{S}^1, B(\mathcal{H}))) \cong
  \cC(\mathbb{S}^1, U(\mathcal{H}))$ is path connected. Let $u_t$,
  $t \in [0, 1]$ be a continuous path of unitaries in $A$ connecting
  $u_1 = I \otimes \chi$ to $u_0 = I \otimes 1$, and define a
  continuous path of unital $*$-homomorphisms
  $\varphi_t: \cC(\mathbb{S}^1) \to A$ by $\varphi_t(\chi) =
  u_t$. Because $\varphi_0$ is just a one-dimensional representation,
  we may apply \Cref{le.tech}, guaranteeing freeness of the coaction
  on $A$ and existence of an equivariant map
  $\cC(\mathbb{S}^1) \to A \circledast_\delta \cC(\mathbb{S}^1)$.
\end{proof}

To see the result of the above proof more explicitly, note that the final equivariant map $\psi: \cC(\mathbb{S}^1) \to A \circledast_\delta  \cC(\mathbb{S}^1) = (B(\mathcal{H}) \otimes \cC(\mathbb{S}^1)) \circledast_\delta  \cC(\mathbb{S}^1)$ is determined by 
\begin{equation*}
\psi(\chi)[t] = u_t \otimes \chi, \hspace{.3 cm} t \in [0,1],
\end{equation*}
so its general form is given by
\begin{equation*}
\psi(f)[t] = f(u_t \otimes \chi), \hspace{.3 cm} t \in [0,1],
\end{equation*}
where $f$ is applied using the continuous functional calculus. As before, $\chi \in \cC(\mathbb{S}^1)$ is the generator $\chi(z) = z$, $u_1 = I \otimes \chi$, $u_0 = I \otimes 1$, and $u_t$ is a continuous path of unitaries. The boundary conditions of the equivariant join are satisfied because
\begin{equation*}
\psi(f)[0] = f( (I \otimes 1) \otimes \chi) = (I \otimes 1) \otimes f \in \mathbb{C} \otimes \cC(\mathbb{S}^1)
\end{equation*}
and
\begin{equation*}
\psi(f)[1] = f( (I \otimes \chi) \otimes \chi) = f(\delta(I \otimes \chi)) \in C^*(\delta(A)) = \delta(A).
\end{equation*}
We have used the fact that $\delta$ is a unital $*$-homomorphism. If $\delta_\Delta$ denotes the coaction on $A \circledast_\delta  \cC(\mathbb{S}^1)$ given by applying $\mathrm{id} \otimes \Delta: A \otimes \cC(\mathbb{S}^1) \to A \otimes \cC(\mathbb{S}^1) \otimes \cC(\mathbb{S}^1)$ at each $t \in [0,1]$, then $\psi$ is $(\Delta, \delta_\Delta)$ equivariant. Indeed, the equations $\Delta(\chi) = \chi \otimes \chi$ and
\begin{equation*}
(\mathrm{id} \otimes \Delta)(\psi(\chi)[t]) = (\mathrm{id} \otimes \Delta)(u_t \otimes \chi) = u_t \otimes \chi \otimes \chi = \psi(\chi)[t] \otimes \chi
\end{equation*}
show that the unital $*$-homomorphisms $\delta_\Delta \circ \psi$ and $(\psi \otimes \mathrm{id}) \circ \Delta$ agree on $\chi$, the generator of the domain $\cC(\mathbb{S}^1)$. 

\Cref{th.type2failure} leads to a wider class of counterexamples to \Cref{conj.BDH2} Type 2. First, we make the following easy observation.

\begin{lemma}\label{le.prod}
  If \Cref{conj.BDH2} Type 2 fails for the compact groups $G_i$, $i\in I$, then it fails for the product $G=\prod_I G_i$. 
\end{lemma}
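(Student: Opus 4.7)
The plan is to invoke \Cref{le.tech}, which recasts the failure of \Cref{conj.BDH2} Type 2 for a classical group $G$ acting on $A$ as the existence of a $G$-equivariant unital $*$-homomorphism $\varphi:\cC(G)\to A$ together with a continuous path $\varphi_t$ in $\mathrm{Hom}(\cC(G),A)$ connecting $\varphi_1=\varphi$ to a one-dimensional representation $\varphi_0:\cC(G)\to \bC\subset A$. By hypothesis, for each $i\in I$ we are given a unital $C^*$-algebra $A_i$ with a $G_i$-action and witnesses $\varphi^{(i)}$, $\varphi^{(i)}_t$ of this form.

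The first step is to form the minimal tensor product $A:=\bigotimes_i^{\min} A_i$ (interpreted, when $I$ is infinite, as the inductive limit of its finite subtensor products under the unital bonding maps $a\mapsto a\otimes 1$) and equip it with the product $G$-action coming from the individual $G_i$-actions. Identifying $\cC(G)\cong \bigotimes_i \cC(G_i)$, I would then set
\[
\varphi_t \;:=\; \bigotimes_i \varphi^{(i)}_t \;:\; \cC(G)\to A.
\]

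The verifications required by \Cref{le.tech} are: (a) $\varphi_1$ is $G$-equivariant, since a tensor product of equivariant maps is equivariant for the product action; (b) $\varphi_0$ is a one-dimensional representation, since a tensor product of one-dimensional representations lands in $\bC\subset A$; and (c) $t\mapsto \varphi_t$ is continuous in the point-norm topology. For (c), on an elementary tensor $\bigotimes_i a_i$ with $a_i=1$ for all but finitely many $i$, only finitely many factors of $\varphi_t$ actually depend on $t$, so continuity reduces to the individual continuity of each $\varphi^{(i)}_t$; the general case follows by density of such elementary tensors and the uniform bound $\|\varphi_t\|\le 1$ forced by unitality. Once (a)--(c) are verified, \Cref{le.tech} produces the desired equivariant map $\cC(G)\to A\circledast_\delta \cC(G)$ and, as a bonus, certifies that the product coaction $\delta$ is free, so that the hypotheses of \Cref{conj.BDH2} Type 2 are genuinely met by $(A,\delta)$.

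The only real bookkeeping obstacle is the infinite-$I$ case: one must check that the inductive-limit tensor product $A$ indeed carries the product coaction of $\cC(G)$ and that point-norm continuity of $\varphi_t$ persists through the limit. Both are formal consequences of the inductive-limit description of $\bigotimes_i^{\min} A_i$ combined with the approximation argument in (c), so no essentially new idea beyond \Cref{le.tech} enters the proof.
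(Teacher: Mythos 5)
Your proposal is correct and follows essentially the same route as the paper: both apply \Cref{le.tech} in each direction, take $A=\bigotimes_I A_i$ with the product action, and set $\varphi_t=\bigotimes_I\varphi^{(i)}_t$. The only (immaterial) difference is your choice of the minimal tensor product where the paper uses the universal one; either works, since $*$-homomorphisms tensor functorially in both cases and $\cC(G)\cong\bigotimes_I\cC(G_i)$ is unambiguous by nuclearity.
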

\begin{proof}
  According to \Cref{le.tech}, the hypothesis ensures the existence of free actions of $G_i$ on $C^*$-algebras $A_i$ together with equivariant morphisms $\varphi_i:\cC(G_i)\to A_i$ that are contractible to one-dimensional representations of $\cC(G_i)$, respectively.  

Now we can apply \Cref{le.tech} again in the opposite direction, using the $C^*$-algebra $A=\bigotimes_I A_i$ (universal tensor product) with the obvious tensor product coaction by
\begin{equation*}
  \cC(G) \cong \bigotimes_I \cC(G_i)
\end{equation*}
and the equivariant map
\begin{equation*}
  \varphi=\bigotimes_I \varphi_i:\cC(G)\to A. 
\end{equation*}
The contractibility of $\varphi$ follows from that of the individual $\varphi_i$. 
\end{proof}

Failure of the conjecture for tori is then immediate.

\begin{corollary}\label{cor.type2failure}
  \Cref{conj.BDH2} Type 2 fails for tori $\bT^I$.
\end{corollary}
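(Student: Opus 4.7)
The plan is essentially a one-line reduction: combine the two results immediately preceding the corollary. By \Cref{th.type2failure}, the Type 2 conjecture fails for the single circle $G_i = \bS^1 = \bT$, witnessed by the free coaction $\delta = \mathrm{id}\otimes \Delta$ on $A_i = B(\cH)\otimes \cC(\bS^1)$ together with the equivariant map $\varphi_i = \varphi:\cC(\bS^1)\to A_i$ constructed via Kuiper's theorem. Since $\bT^I = \prod_I \bS^1$ is by definition a product of copies of $\bS^1$ indexed by $I$, \Cref{le.prod} then immediately upgrades this failure to $\bT^I$.

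So I would write: take the data $(A_i,\delta_i,\varphi_i)$ from \Cref{th.type2failure} for each $i\in I$, form the universal tensor product $A=\bigotimes_I A_i$ with the tensor-product coaction by $\cC(\bT^I)\cong \bigotimes_I \cC(\bS^1)$, and take $\varphi=\bigotimes_I \varphi_i$; then invoke \Cref{le.prod} (or, equivalently, re-invoke \Cref{le.tech}) to conclude that $\varphi$ is equivariant and contractible through $\mathrm{Hom}(\cC(\bT^I),A)$ to a one-dimensional representation, producing the desired equivariant unital $*$-homomorphism $\cC(\bT^I)\to A\circledast_\delta \cC(\bT^I)$.

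There is no real obstacle here: \Cref{le.prod} has already been proved in full generality over arbitrary index sets $I$, so no separate argument is needed to handle infinite tori — the universal tensor product and the componentwise contractions of the $\varphi_i$ do all of the work. The only thing worth flagging is that one should be explicit that $I$ may be of arbitrary cardinality, and that the tensor product and product of contraction paths are interpreted in the same universal sense used in \Cref{le.prod}.

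\begin{proof}
By \Cref{th.type2failure}, \Cref{conj.BDH2} Type 2 fails for $G_i = \bS^1$ for every $i$ in any index set $I$. Since $\bT^I = \prod_I \bS^1$, \Cref{le.prod} applies and yields the failure of \Cref{conj.BDH2} Type 2 for $\bT^I$.
\end{proof}
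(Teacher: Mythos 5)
Your proposal is correct and matches the paper's proof exactly: the paper also deduces the corollary by applying \Cref{th.type2failure} to $\bS^1$ and then \Cref{le.prod} to the product $\bT^I = \prod_I \bS^1$. The extra remarks about arbitrary index sets and the universal tensor product are accurate but already covered by the generality of \Cref{le.prod} as stated.
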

\begin{proof}
  This follows from applying \Cref{th.type2failure} and \Cref{le.prod} to $\bS^1$.
\end{proof}

The counterexample presented in \Cref{th.type2failure} uses a $C^*$-algebra which is highly non-nuclear, but this can be avoided.

\begin{theorem}
  There is a counterexample to \Cref{conj.BDH2} Type 2 for which
  $H = \cC(\bS^1)$ and $A$ is a unital, separable, nuclear
  $C^*$-algebra.
\end{theorem}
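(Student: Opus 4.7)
The plan is to mimic the proof of \Cref{th.type2failure}, but replace $B(\mathcal{H})$ by a unital, separable, nuclear $C^*$-algebra whose unitary group remains path-connected after tensoring with $\cC(\bS^1)$. My candidate is the Cuntz algebra $\cO_2$: set $A = \cO_2\otimes \cC(\bS^1)$ with coaction $\delta = \mathrm{id}\otimes \Delta$, and define the equivariant map $\varphi_1:\cC(\bS^1)\to A$ by $\chi\mapsto 1\otimes \chi$. Since $\cO_2$ is separable, unital, and nuclear, so is $A$. By \Cref{le.tech}, it then suffices to connect $1\otimes \chi$ to $1$ through a continuous path of unitaries in $A$, as this translates into a homotopy in $\mathrm{Hom}(\cC(\bS^1),A)$ from $\varphi_1$ to the one-dimensional representation $\chi\mapsto 1$.

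The crux is showing that $U(A)$ is path connected. First, $A$ is properly infinite, since it contains the two orthogonal isometries $s_i\otimes 1$ coming from the standard Cuntz generators of $\cO_2$. Second, both $\cO_2$ and $\cC(\bS^1)$ lie in the bootstrap (UCT) class, so the K\"unneth theorem computes $K_*(A)$ from $K_*(\cO_2)=0$ and forces $K_*(A)=0$; in particular $K_1(A)=0$. A classical theorem of Cuntz states that for any properly infinite unital $C^*$-algebra $A$, the natural map $U(A)/U_0(A)\to K_1(A)$ is an isomorphism. Combining these inputs yields $U(A)=U_0(A)$, so $1\otimes \chi$ and $1$ lie in the same path component of $U(A)$, producing the needed path and, via \Cref{le.tech}, the desired equivariant unital $*$-homomorphism $\cC(\bS^1)\to A\circledast_\delta \cC(\bS^1)$. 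Freeness of $\delta$ is a bonus of the same lemma.

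The principal obstacle is not the verification of the path-connectedness---which reduces to well-known $K$-theoretic technology---but rather the selection of the right algebra in place of $B(\mathcal{H})$. For a general $B$, the K\"unneth formula identifies $[1\otimes\chi]\in K_1(B\otimes \cC(\bS^1))$ with $[1_B]\otimes[\chi]$, so its vanishing forces $[1_B]=0$ in $K_0(B)$. This immediately rules out apparently natural candidates such as $\cO_\infty$ or UHF algebras, whose $K_0$ carries a nontrivial class of the unit, and also commutative choices like $\cC(X)$. The algebra $\cO_2$ is essentially the canonical fit---$K$-theoretically trivial, nuclear, separable, unital, and properly infinite---and is therefore perfectly tailored to combine vanishing $K_1$ with the Cuntz-type path-connectedness theorem to deliver the desired counterexample.
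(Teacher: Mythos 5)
Your proof is correct, and it reaches the same example ($\cO_2$ tensored with $\cC(\bS^1)$, with $K_1$ killed by the K\"unneth formula) by a genuinely different final step than the paper. The paper does not invoke proper infiniteness at all: it only knows that $[1\otimes\chi]=0$ in $K_1(B\otimes\cC(\bS^1))$, so it first finds a path from $(1\otimes\chi)\oplus I_{n-1}$ to $I_n$ in some matrix amplification $M_n(B\otimes\cC(\bS^1))$, shuffles the diagonal entries using path-connectedness of $U_n(\bC)$ to connect $1\otimes\chi\otimes I_n$ to the identity, and then takes $A=B\otimes\cC(\bS^1)\otimes M_n(\bC)$ as the counterexample algebra, where $n$ depends on the chosen path. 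You instead work directly in $A=\cO_2\otimes\cC(\bS^1)$, observe that it is properly infinite via the isometries $s_i\otimes 1$, and quote the standard Cuntz-type theorem that $U(A)/U_0(A)\to K_1(A)$ is an isomorphism for properly infinite unital $C^*$-algebras, so $K_1(A)=0$ forces $U(A)=U_0(A)$. Your route buys a cleaner statement (no auxiliary matrix factor, a canonical algebra independent of the choice of path) at the price of needing proper infiniteness and the $U/U_0\cong K_1$ theorem; the paper's route is more elementary and would work verbatim for any unital separable nuclear $B$ with vanishing $K$-theory, properly infinite or not, since tensoring with $M_n(\bC)$ substitutes for the compression argument hidden in the Cuntz theorem. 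Both arguments then feed the same data into \Cref{le.tech}, which supplies freeness and the equivariant map into the join.
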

\begin{proof}
  Let $B$ be a unital, separable, nuclear $C^*$-algebra with
  $K_0(B) \cong \{0\} \cong K_1(B)$, such as the Cuntz Algebra $\mathcal{O}_2$ \cite[Theorems 3.7 and 3.8]{cuntzalgebras}. Since the $K$-groups of $B$ are trivially torsion free and the commutative $C^*$-algebra $\cC(\bS^1)$ is certainly in the bootstrap class, the K{\"u}nneth formula \cite[Theorem 23.1.3]{blackadar} shows that
  $K_1(B \otimes \cC(\bS^1)) \cong (K_0(B) \otimes K_1(\cC(\bS^1))) \oplus (K_1(B) \otimes K_0(\cC(\bS^1))) \cong \{0\}$. Let $\chi \in \cC(\bS^1)$
  denote the standard generating character and fix $n$
  such that there is a path of unitaries in
  $M_n(B \otimes \cC(\bS^1))$ connecting $(1 \otimes \chi) \oplus I_{n-1}$ to $I_n$. Since the matrix unitary group $U_n(\bC)$ is path
  connected, it follows that $I_1 \oplus (1 \otimes \chi) \oplus I_{n-2}$,
  $I_2 \oplus (1 \otimes \chi) \oplus I_{n-3}$, \ldots, $I_{n-1} \oplus (1 \otimes \chi)$ are also connected to the identity, as is their
  product $\bigoplus\limits_{i=1}^n 1 \otimes \chi$. That is, in the separable, unital, nuclear $C^*$-algebra
  $A := B \otimes \cC(\bS^1) \otimes M_n(\bC)$,
  $v_1 := 1 \otimes \chi \otimes I_n$ is connected via a path of
  unitaries $v_t$ to the identity element $v_0$.

Let $\bS^1$ act on $A$ via rotation in the $\cC(\bS^1)$ tensorand, so that $v_1$ is a $\chi$-eigenvector for this action. Then
\begin{equation*}
\phi_t: \chi \in \cC(\bS^1) \mapsto v_t \in A
\end{equation*}
defines a continuous path of morphisms. Since $\chi$ and $v_1$ are
$\chi$-eigenvectors in $\cC(\bS^1)$ and $A$, respectively, $\phi_1$ is
equivariant. Further, $\phi_0$ is a $1$-dimensional representation
since $v_0$ is the identity element. Therefore, the conditions of
\Cref{le.tech} are satisfied, and the counterexample follows.
\end{proof}

Despite the above counterexamples, there are some circumstances under which Type 2 of \Cref{conj.BDH2} holds. See for example \cite[Corollary 2.7]{hajac}, in which the authors show that Type 2 holds for the compact quantum group $\cC(SU_q(2))$ acting on its iterated joins, based on a general result about finite-dimensional representations. In a different vein, the next theorem is dual to the following topological argument: an equivariant map $X * G \to G$ is automatically surjective, and $X * G$ is connected, so such a map cannot exist if $G$ is disconnected. To adapt this picture to the fully noncommutative setting (when neither $A$ nor $H$ is abelian), note that a compact group $G$ is disconnected precisely when it admits a nontrivial finite quotient $G\to L$. In turn, this corresponds to an embedding $\cC(L)\to \cC(G)$ of a finite-dimensional Hopf $C^*$-algebra into that of $G$. For this reason, we regard such an embedding in the quantum case as an analogue of disconnectedness.

\begin{theorem}\label{th.disc}
  Suppose a compact quantum group $H$ admits an equivariant embedding $K \hookrightarrow H$ of a nontrivial compact quantum group $(K,\Delta)$ whose underlying Hopf $C^*$-algebra is finite-dimensional. Then \Cref{conj.BDH2} Type 2 holds for any free coaction $\delta:A\to A\otimes H$.
\end{theorem}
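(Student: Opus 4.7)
My plan is to use the finite-dimensional Hopf subalgebra $\iota(K)\subset H$ to force the hypothetical $\psi$ to make a Hopf-algebra automorphism of $K$ coincide with a rank-one map, which is impossible for nontrivial $K$. I would carry this out in four steps.

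First I would unpack $\psi$ at the endpoints. The join boundary conditions give $\psi_0(h)=1_A\otimes\psi_0'(h)$ for an equivariant unital $*$-homomorphism $\psi_0':H\to H$, and $\psi_1(h)=\delta(\varphi_1(h))$ for a unique equivariant $*$-homomorphism $\varphi_1:H\to A$ (using injectivity of $\delta$). Since $K$ is finite-dimensional and $\iota$ is a morphism of compact quantum groups, $\iota(K)\subset\cO(H)$, so the algebraic counit $\varepsilon_H$ is defined on $\iota(K)$. Applying $\varepsilon_H\otimes\mathrm{id}$ to $\Delta\circ(\psi_0'\iota)=(\psi_0'\iota\otimes\iota)\Delta_K$ and invoking the counit axiom yields $\psi_0'\iota=\iota\circ\Phi_\chi$, where $\chi:=\varepsilon_H\circ\psi_0'\circ\iota:K\to\bC$ is a character and $\Phi_\chi:=(\chi\otimes\mathrm{id})\Delta_K:K\to K$ is a Hopf-algebra automorphism (its inverse is $\Phi_{\chi\circ S_K}$, as follows from coassociativity).

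Next, I would define a continuous family of unital $*$-homomorphisms $\tilde\varphi_t:=(\mathrm{id}_A\otimes\varepsilon_H)\circ\psi_t\circ\iota:K\to A$. Though $\varepsilon_H$ is typically unbounded on $H$, equivariance pins $\psi_t\iota(K)$ inside $A\otimes F$, where $F:=\bigoplus_{\rho\in\Sigma_K}H_\rho\subset\cO(H)$ is the finite-dimensional sum of those isotypic components of $H$ meeting $\iota(K)$; on $F$ the counit is bounded, and its multiplicativity on $\cO(H)$ ensures that $\tilde\varphi_t$ is indeed a $*$-homomorphism. The endpoint values are $\tilde\varphi_0(k)=\chi(k)\cdot 1_A$ and $\tilde\varphi_1=\varphi_1\iota$, and $t\mapsto\tilde\varphi_t(k)$ is continuous for each $k\in K$.

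Then I would apply projection rigidity of continuous paths. For each minimal central projection $p\in K$, $t\mapsto\tilde\varphi_t(p)$ is a norm-continuous path of projections in $A$ starting at $\chi(p)\cdot 1_A\in\{0,1_A\}$; since any two distinct projections in a $C^*$-algebra with one of them equal to $0$ or $1$ are at norm distance $1$, both $\{t:\tilde\varphi_t(p)=0\}$ and $\{t:\tilde\varphi_t(p)=1_A\}$ are clopen in $[0,1]$, forcing the path to be constant. Thus $\varphi_1\iota(p)=\chi(p)\cdot 1_A$ for each minimal central $p$. Writing $K=\bigoplus_i M_{n_i}$ and noting that $\chi$ vanishes on all but one one-dimensional summand extends this to $\varphi_1\iota(k)=\chi(k)\cdot 1_A$ for every $k\in K$.

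Finally, substituting $\varphi_1\iota=\chi(\cdot)1_A$ into the equivariance identity $\delta\circ(\varphi_1\iota)=(\varphi_1\iota\otimes\iota)\Delta_K$ (derived from equivariance of $\varphi_1$ combined with $\Delta_H\iota=(\iota\otimes\iota)\Delta_K$) gives $\chi(k)\cdot(1_A\otimes 1_H)=1_A\otimes\iota(\Phi_\chi(k))$; injectivity of $\iota$ then yields $\Phi_\chi(k)=\chi(k)\cdot 1_K$ for all $k\in K$, contradicting $\Phi_\chi$ being a bijection on the nontrivial algebra $K$. The main obstacle I anticipate is the continuity of $\tilde\varphi_t$ in step two: because $\varepsilon_H$ is generally unbounded on $H$, one must carefully verify, via equivariance, that the images $\psi_t\iota(K)$ all lie in the fixed finite-dimensional region $A\otimes F$ where the counit is tractable.
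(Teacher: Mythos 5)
Your proposal is correct and follows essentially the same route as the paper's proof: restrict $\psi$ to the finite-dimensional subalgebra $K$, use equivariance to confine the image to $\cC([0,1],A\otimes \iota(K))$, apply the counit to extract a norm-continuous path of $*$-homomorphisms $K\to A$ from a character to the equivariant endpoint, and derive a contradiction from rigidity along the path. The only (cosmetic) differences are that you run the rigidity step on minimal central projections rather than on finite-order unitaries and their spectra, and that your convolution-automorphism argument with $\Phi_\chi$ spells out explicitly why the equivariant endpoint cannot be scalar-valued, a point the paper's proof asserts more tersely.
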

\begin{proof}
  Suppose we have an $H$-equivariant map $\psi:H\to A\circledast_\delta  H$. The image of $K$ must be contained in
  \begin{equation}\label{eq:isotyp}
    \bigoplus_\rho (A\circledast_\delta  H)_\rho \subset \bigoplus_\rho (\cC([0,1],A\otimes H))_\rho
  \end{equation}
as $\rho$ ranges over the irreducible $K$-comodules. Since the $H$-coaction in \Cref{eq:isotyp} is the regular one on the $H$-tensorand, the right hand side of \Cref{eq:isotyp} is simply $\cC([0,1],A\otimes K)$.

Since $K$ is a finite-dimensional Hopf $C^*$-algebra, there exists a counit $\varepsilon:K\to \bC$. Applying $\varepsilon$ to the $K$ tensorand of 
\begin{equation*}
  \psi(K)\subset \cC([0,1],A\otimes K) 
\end{equation*}
yields a path $\psi_t:K\to A$ of $C^*$-algebra morphisms such that $\psi_1$ is $H$-equivariant while $\psi_0$ takes values in $\bC\subset A$. 

Now, $K$ is a finite-dimensional $C^*$-algebra, and hence it has a $\bC$-basis consisting of finite-order unitaries. Moreover, $H$-equivariance ensures that for some finite-order unitary $u\in K$, the element $\psi_1(u)\in A$ is not a scalar. But then the spectrum of $\psi_1(u)$ is a non-trivial finite subgroup of $\bS^1$, and the continuity of the spectrum for the norm topology on the unitary subgroup of $A$ implies that $\psi_1(u)$ cannot be connected by a path to $\psi_0(u)\in \bC\subset A$. We have reached a contradiction, so there can be no equivariant map $\psi$.  
\end{proof}

Restricting our attention once again to the commutative case $H = \cC(G)$, we know from \Cref{th.disc} that  there are no counterexamples to Type 2 for which $G$ has a nontrivial finite quotient group, so $G$ must be connected to produce a counterexample. Further, there are counterexamples for all tori $G = \bT^I$, which of course are path connected, from \Cref{cor.type2failure}.

\begin{question}\label{ques}
If $G$ is a compact, abelian, (path) connected group, must a Type 2 counterexample exist for $H = \cC(G)$?
\end{question}

A compact abelian group $G$ is connected if and only if its discrete abelian Pontryagin dual $\Gamma = \widehat{G}$ is torsion-free (\cite[Corollary 7.70]{compactgroups}). Pontryagin duality is expressed in the identity $\cC(G) \cong C^*(\Gamma)$, and for any discrete group $\Gamma$, abelian or not, there is a natural way to make $C^*(\Gamma)$ into a compact quantum group using the comultiplication
\begin{equation*}
\Delta: \sum a_\gamma \gamma \in C^*(\Gamma) \mapsto \sum a_\gamma \hspace{2pt} \gamma \otimes \gamma \in C^*(\Gamma) \otimes C^*(\Gamma).
\end{equation*}

We will see below that the analogue of \Cref{ques} in the discrete
torsion-free (non-abelian) setting can be resolved in the negative. That is, there are compact quantum groups of the form $C^*(\Gamma)$ with
torsion-free non-abelian $\Gamma$ for which \Cref{conj.BDH2} Type 2
holds. Moreover, the groups $\Gamma$ in question will be amenable, so
that there will be no ambiguity regarding which $C^*$ completion
$C^*(\Gamma)$ we are considering.

Let $n\ge 2$ be a positive integer, and equip the torus
$\bT^n\cong \bR^n/\bZ^n$ with an automorphism $\sigma$ regarded
simultaneously as an element of $SL(n,\bZ)$ fixing the lattice
$\bZ^n\cong \pi_1(\bT^n)$. Throughout the rest of \Cref{se.tp2} we
assume that $\sigma$ is {\it hyperbolic}, i.e. its eigenvalues as an
element of $SL(n,\bZ)$ have absolute value not equal to $1$. For
background on hyperbolic automorphisms on smooth manifolds we refer to
\cite[Chapter 6]{ergd-hyp}; in reference to the eigenvalue condition
on $\sigma$, see in particular \cite[Definition 6.3 and Exercise
6.2]{ergd-hyp}. Hyperbolicity also implies that $\sigma$ is {\it
  expansive} in the sense that there is an $\varepsilon>0$ such that
\begin{equation}\label{eq:exp}
  \bT^n\ni x\ne y\Rightarrow \mathrm{sup}_{n\in \bZ}~d(\sigma^n(x),\sigma^n(y))>\varepsilon
\end{equation}
for any distance function $d$ on the torus (see e.g. \cite[Definition
5.5]{ergd-hyp}).

Now, $\sigma$ induces an automorphism $\hat\sigma$ on the Pontryagin-dual
lattice $\bZ^n\cong \widehat{\bT^n}$, allowing us to construct the
extension $\bZ^n\rtimes_{\hat\sigma}\bZ$.

 \begin{lemma}\label{le.hyp}
   If $\sigma$ as above is hyperbolic, then $\bZ^n\rtimes_{\hat\sigma}\bZ$ is
   amenable and torsion-free.
\end{lemma}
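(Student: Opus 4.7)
The plan is to separate the two claims, handling amenability by a soft argument and torsion-freeness by a direct computation in the semidirect product; notably, hyperbolicity plays no role in either, though it remains the standing assumption of the section for subsequent results.

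For amenability, I would invoke the short exact sequence
\begin{equation*}
  1 \to \bZ^n \to \bZ^n\rtimes_{\hat\sigma}\bZ \to \bZ \to 1.
\end{equation*}
Both the kernel $\bZ^n$ and the quotient $\bZ$ are abelian, hence amenable, and since amenability is preserved under extensions, the middle group is amenable. This step is essentially free.

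For torsion-freeness, the plan is to expand powers in the semidirect product explicitly. Writing a general element as $(v,k)\in \bZ^n\rtimes_{\hat\sigma}\bZ$ and using the multiplication $(v,k)(w,\ell)=(v+\hat\sigma^k(w),k+\ell)$, induction on $m$ gives
\begin{equation*}
  (v,k)^m = \Big(\sum_{j=0}^{m-1}\hat\sigma^{jk}(v),\, mk\Big).
\end{equation*}
If $(v,k)$ is a torsion element of order dividing $m\ge 1$, then looking at the second coordinate forces $mk=0$ in $\bZ$, hence $k=0$. Feeding $k=0$ back into the first coordinate yields $mv = 0$ in the torsion-free abelian group $\bZ^n$, so $v=0$. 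Thus $(v,k)=(0,0)$, which establishes the claim.

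No step is really an obstacle; the only thing to be careful about is that $\hat\sigma$ is indeed an automorphism of $\bZ^n$ (which is clear, since $\sigma\in SL(n,\bZ)$ dualizes to an automorphism of the character lattice), so that the semidirect product makes sense and the power formula holds. I would flag explicitly that the argument is uniform in $\sigma\in \mathrm{Aut}(\bZ^n)$, to make clear why the hyperbolicity hypothesis is recorded but unused here — it is packaged into the standing setup of \Cref{se.tp2} for later use rather than for this lemma.
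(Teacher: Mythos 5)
Your proposal is correct, and the amenability half coincides with the paper's (extension of two abelian groups). The torsion-freeness half is the same computation but closes differently, and your closing is arguably cleaner and strictly more general: the paper also expands $x^k=(y+\hat\sigma^l y+\cdots+\hat\sigma^{l(k-1)}y)\,\sigma^{lk}$, but then justifies nontriviality by observing that for $l\ne 0$ the matrix $\sigma^l$ has no root-of-unity eigenvalues (so the geometric sum $1+\hat\sigma^l+\cdots+\hat\sigma^{l(k-1)}$ is injective on $\bZ^n$), i.e.\ it does invoke a consequence of hyperbolicity. You instead read off the $\bZ$-coordinate $mk$ to force $k=0$ and only then use torsion-freeness of $\bZ^n$; this is the standard ``extension of torsion-free by torsion-free is torsion-free'' argument and, as you correctly flag, works for an arbitrary $\hat\sigma\in\mathrm{Aut}(\bZ^n)$. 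So your version makes explicit that hyperbolicity is genuinely needed only later (in \Cref{th.hyp}, via expansivity), whereas the paper's proof of \Cref{le.hyp} nominally leans on it; both arguments are valid, but yours isolates the hypothesis more sharply.
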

\begin{proof}
  First, $\bZ^n\rtimes_{\hat\sigma}\bZ$ is certainly amenable, as it
  is an extension of two abelian groups. Writing a generic nonzero
  element $x$ of $\bZ^n\rtimes_{\hat\sigma}\bZ$ as $y\sigma^l$ for
  some
  \begin{equation*}
    (0,0)\ne (y,l)\in \bZ^n\times\bZ,
  \end{equation*}
  the $k^{th}$ power of $x$ is
  \begin{equation*}
    (y+\sigma^l y+\cdots+\sigma^{l(k-1)}y)\sigma^{lk}.
\end{equation*}
This element is nontrivial, as for nonzero $l$,
$\sigma^l\in SL(n,\bZ)$ has no root-of-unity eigenvalues.
\end{proof}

Since $\bZ^n\rtimes_{\hat\sigma}\bZ$ as above is torsion-free, the
compact quantum group $C^*(\bZ^n\rtimes_{\hat\sigma}\bZ)$ does not fit
within the framework of \Cref{th.disc}. However, \Cref{conj.BDH2} Type
2 still holds for $C^*(\bZ^n\rtimes_{\hat\sigma}\bZ)$.

\begin{theorem}\label{th.hyp}
 Let $\bZ^n\rtimes_{\hat\sigma}\bZ$ be as in \Cref{le.hyp}. Then \Cref{conj.BDH2} Type 2 holds for
  the compact quantum group $C^*(\bZ^n\rtimes_{\hat\sigma}\bZ)$.
\end{theorem}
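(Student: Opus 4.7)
The plan is to argue by contradiction: assume an $(\Delta, \delta_\Delta)$-equivariant $*$-homomorphism $\psi : C^*(\Gamma) \to A \circledast_\delta C^*(\Gamma)$ exists, where $\Gamma = \bZ^n \rtimes_{\hat\sigma} \bZ$. Mirroring the unpacking used in \Cref{th.type2failure} and \Cref{cor.type2failure}, computing $\gamma$-isotypic parts of the codomain forces $\psi(u_\gamma)(t) = a_\gamma(t) \otimes u_\gamma$ for norm-continuous paths $a_\gamma \in \cC([0,1], A)$; multiplicativity and unitarity of $\psi$ then say that $\phi_t(\gamma) := a_\gamma(t)$ defines, for each $t$, a unitary representation $\phi_t : \Gamma \to U(A)$, that $t \mapsto \phi_t(\gamma)$ is norm-continuous for each fixed $\gamma$, and that the join boundary conditions read $\phi_0(\gamma) \in \bS^1 \cdot 1_A$ (a character) and $\phi_1(\gamma) \in A_\gamma$ (equivariance for $\delta$).

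The next step restricts attention to the normal subgroup $\bZ^n \subset \Gamma$: the commuting unitaries $\phi_t(e_1), \ldots, \phi_t(e_n)$ assemble into a $*$-homomorphism $\pi_t : \cC(\bT^n) = C^*(\bZ^n) \to A$, and I write $\Sigma(t) \subset \bT^n$ for its joint spectrum. The semidirect product relation $sxs^{-1} = \hat\sigma(x)$, combined with multiplicativity of $\phi_t$, shows that conjugation by $\phi_t(s)$ implements pullback by $\sigma$ on the image of $\pi_t$, so $\Sigma(t)$ is $\sigma$-invariant. Since $\phi_0$ takes values in the center $\bS^1 \cdot 1_A$, the same conjugation identity at $t=0$ forces $\phi_0 \circ \hat\sigma = \phi_0$ on $\bZ^n$; dually, $\Sigma(0) = \{p_0\}$ for some $\sigma$-fixed $p_0 \in \bT^n$.

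The heart of the argument is dynamical, via the expansivity constant $\varepsilon$ from \Cref{eq:exp}. If $\mathrm{diam}(\Sigma(t)) < \varepsilon$, then for $y, z \in \Sigma(t)$ the orbit points $\sigma^k y$ and $\sigma^k z$ remain in $\Sigma(t)$ by $\sigma$-invariance, so $d(\sigma^k y, \sigma^k z) < \varepsilon$ for all $k \in \bZ$, and expansivity forces $y = z$; applied to $y$ and $\sigma y$ this further gives $\sigma y = y$. Thus any $\Sigma(t)$ of diameter less than $\varepsilon$ is a singleton consisting of a $\sigma$-fixed point. Furthermore, $t \mapsto \mathrm{diam}(\Sigma(t))$ is continuous, since it equals $\|(\pi_t \otimes \pi_t)(d)\|$ with $d \in \cC(\bT^n \times \bT^n)$ the distance function, and the $\sigma$-fixed set is finite by hyperbolicity. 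A standard open/closed/connectedness argument on the subset $\{t \in [0,1] : \Sigma(t) = \{p_0\}\}$ then propagates the singleton conclusion from $t = 0$ to every $t \in [0,1]$.

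The contradiction follows immediately: $\Sigma(1) = \{p_0\}$ means $\phi_1(x) = \chi_0(x) \cdot 1_A$ with $\chi_0(x) \in \bS^1$ for every $x \in \bZ^n$, while equivariance demands $\delta(\phi_1(x)) = \phi_1(x) \otimes u_x$, which together with $\delta(1_A) = 1_A \otimes 1_H$ collapses to $1_H = u_x$, impossible as soon as $x \neq 0$. The main obstacle to carrying out this plan is the joint analytic/dynamical control of $\Sigma(t)$: one must verify continuity of the diameter, combine it with expansivity to pin down singletons, and use isolation of the $\sigma$-fixed points to prevent the singleton from drifting away from $p_0$ as $t$ varies.
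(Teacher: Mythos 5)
Your proposal is correct and follows essentially the same route as the paper: restrict the hypothetical equivariant map to $C^*(\bZ^n)\cong\cC(\bT^n)$, observe that the joint spectrum $\Sigma(t)$ of the images of the generators is a $\sigma$-invariant closed subset of $\bT^n$ varying continuously from a singleton at $t=0$, and use expansivity of the hyperbolic automorphism to control it. The only divergence is in the endgame: the paper shows the time-$1$ morphism is injective (so $\mathrm{sp}_1=\bT^n$) and contradicts this with the spectrum being trapped near a point for small $t$, whereas you propagate the singleton property to $t=1$ by a clopen/connectedness argument and then contradict equivariance directly (forcing $u_x=1_H$ for $x\neq 0$); both work, and your version sidesteps the paper's separate injectivity argument while spelling out the connectedness step that the paper leaves implicit.
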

\begin{proof}
  Let $\Gamma := \bZ^n\rtimes_{\hat\sigma}\bZ$, and suppose a
  counterexample {\it does} exist for $C^*(\Gamma)$, in the form of a
  free coaction $\delta: A\to A\otimes C^*(\Gamma)$ and an equivariant
  morphism
  \begin{equation*}
    \psi:C^*(\Gamma)\to A\circledast_\delta C^*(\Gamma).
  \end{equation*}
  Regard $\psi$ as a path $\psi_t$, $t\in [0,1]$, as in \Cref{eq:nc},
  and consider the $C^*$-subalgebra $C^*(\bZ^n) \cong \cC(\bT^n)$ of
  $C^*(\Gamma)$. Restricting $\psi_t$ produces a path of equivariant
  morphisms $\phi_t: \cC(\bT^n) \to A \otimes \cC(\bT^n)$, where
  $\cC(\bT^n)$ acts on the right tensorand of the codomain. The
  boundary conditions
  \begin{equation*}
    \textrm{Ran}(\phi_0) \subseteq \bC\otimes \cC(\bT^n) \text{ and } \textrm{Ran}(\phi_1) \subseteq (A\otimes \cC(\bT^n)) \cap \delta(A) \subset B \otimes \cC(\bT^n)
  \end{equation*}
  are also satisfied, where $B$ is the closed direct sum of the
  isotypic subspaces of $A$ corresponding to $\bZ^n \subset
  \Gamma$. Applying the counit to the right tensorand produces a path
  of morphisms $\cC(\bT^n) \to A$ connecting a character
  $\cC(\bT^n) \to \bC$ to an equivariant morphism $\cC(\bT^n) \to B$,
  which must be injective. To show injectivity, we may first factor
  through the commutative range $C^*$-algebra to obtain
  $\cC(\bT^n) \to \cC(X) \hookrightarrow B$. Then we note that because
  the original morphism is equivariant, there is a corresponding
  coaction of $\cC(\bT^n)$ on $\cC(X)$, i.e. an action of $\bT^n$ on
  $X$. Finally, any equivariant continuous map $X \to \bT^n$ is
  surjective by direct examination of an orbit, so any equivariant
  morphism $\cC(\bT^n) \to \cC(X)$ is injective.

  The joint spectrum of the images of the $n$ unitary generators of
  $C^*(\bZ^n) \cong \cC(\bT^n)$ changes along a path $\mathrm{sp}_t$,
  $t\in [0,1]$ (continuous in the Hausdorff topology on closed subsets
  of $\bT^n$) from a singleton $\mathrm{sp}_0 = \{p\}$ to
  $\mathrm{sp}_1=\bT^n$. Moreover, because all of the homomorphisms
  are restrictions from $C^*(\Gamma)$, all of the spectra
  $\mathrm{sp}_t$ are $\sigma$-invariant. The eigenvalues of $\sigma$
  as an element of $SL(n,\bZ)$ have absolute values not equal to $1$,
  ensuring via the expansivity condition \Cref{eq:exp} the existence
  of some $r>0$ such that the orbit of any non-trivial element of the
  open ball $B_r(p)$ (in the standard Euclidean metric on
  $\bT^n \cong \bR^n / \bZ^n$) under $\bZ$ intersects the complement
  of $B_r(p)$. On the other hand, for small $t$ the spectrum
  $\mathrm{sp}_t$ will be contained in the open ball $B_r(p)$, which
  is a contradiction.
\end{proof}


\section{Type 1: Reductions and Possible Approaches}\label{se.tp1}

Corollary 2.4 of \cite{saturated} implies that Type 1 of
\Cref{conj.BDH2} holds when $H = \cC(G)$ for $G$ a compact
group with a nontrivial torsion element. Here we show a reduction of
the remaining classical case $H = \cC(G)$, with some comments on how
it might be proved. The problem reduces easily to subgroups of $G$, so
we may certainly assume our compact torsion-free $G$ is abelian, and
it is well-known that a copy of $\bZ_p = \varprojlim \bZ/p^n$, for
some prime $p$, embeds into $G$. We include a proof in order to glean
some information about the Pontryagin dual.

\begin{proposition}\label{pr.Zp_sbgp}
  Suppose $G$ is a nontrivial compact, torsion-free, abelian group,
  and fix any nontrivial character $1 \not= \tau \in
  \widehat{G}$. Then there is an embedding of $\bZ_p$ into $G$ for
  which the restriction of $\tau$ to $\bZ_p$ is nontrivial.
\end{proposition}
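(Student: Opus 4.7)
The plan is to translate the statement across Pontryagin duality and then exploit the injectivity of the Pr\"ufer group $\bZ(p^\infty) = \varinjlim_n \bZ/p^n$ in the category of abelian groups. Since $G$ is compact abelian, $\widehat{G}$ is a discrete abelian group, and torsion-freeness of $G$ means multiplication by every positive integer $n$ is injective on $G$, which dualizes to surjectivity of multiplication by $n$ on $\widehat{G}$; hence $\widehat{G}$ is divisible. Under Pontryagin duality, a closed embedding $\bZ_p \hookrightarrow G$ corresponds to a surjection $\widehat{G} \twoheadrightarrow \widehat{\bZ_p} \cong \bZ(p^\infty)$, and the restriction $\tau|_{\bZ_p}$ is nontrivial precisely when the image of $\tau$ under this surjection is nonzero. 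The goal therefore reduces to producing, for some prime $p$, a surjection $f: \widehat{G} \twoheadrightarrow \bZ(p^\infty)$ with $f(\tau) \neq 0$.

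I would build $f$ by first defining it on $\langle\tau\rangle \le \widehat{G}$ and then extending. Choose the prime $p$ depending on $\tau$: if $\tau$ has finite order $n$, take any prime $p$ dividing $n$; otherwise take $p$ arbitrary. In either case send $\tau$ to an element $\alpha \in \bZ(p^\infty)$ of order $p$. This defines a homomorphism $g: \langle\tau\rangle \to \bZ(p^\infty)$: in the finite-order case the sole relation $n\tau = 0$ is respected because $p \mid n$ forces $n\alpha = 0$, and in the torsion-free case there is nothing to check.

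To finish, I would invoke injectivity: divisibility of $\bZ(p^\infty)$ makes it an injective $\bZ$-module, so $g$ extends to $f: \widehat{G} \to \bZ(p^\infty)$ with $f(\tau) = \alpha \neq 0$. The image $f(\widehat{G})$ is a divisible subgroup of $\bZ(p^\infty)$ (as a homomorphic image of the divisible group $\widehat{G}$), and the only divisible subgroups of $\bZ(p^\infty)$ are $\{0\}$ and $\bZ(p^\infty)$ itself, since the proper subgroups $\bZ/p^k$ fail to be $p$-divisible. Hence $f$ is surjective, and dualizing produces the sought embedding $\bZ_p \hookrightarrow G$. The main step to handle with care is matching $p$ to the order of $\tau$ in the torsion case; beyond that the argument runs essentially automatically.
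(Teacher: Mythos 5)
Your argument is correct, and it takes a genuinely different route from the paper's. You dualize immediately: torsion-freeness of $G$ becomes divisibility of $\widehat{G}$, and the problem becomes finding a surjection $\widehat{G} \twoheadrightarrow \bZ/p^\infty$ not killing $\tau$. You then get such a map by prescribing it on $\langle\tau\rangle$ (with the one necessary care: choosing $p$ to divide the order of $\tau$ when that order is finite, so that $\tau\mapsto\alpha$ with $\alpha$ of order $p$ respects the relation $n\tau=0$), extending by injectivity of the divisible group $\bZ/p^\infty$, and concluding surjectivity because the image is a nonzero divisible subgroup of $\bZ/p^\infty$, hence everything. The paper instead first passes to the closed monothetic subgroup $H=\overline{\langle g\rangle}$ with $\tau(g)\neq 1$, so that $\Gamma=\widehat{H}$ embeds in the discrete circle, then uses the decomposition $\bS^1\cong\bigoplus_p(\bZ/p^\infty)\oplus\bQ^{\oplus 2^{\aleph_0}}$ and a two-case analysis (projecting onto a Pr\"ufer summand directly, or onto a $\bQ$-summand followed by $\bQ\to\bQ/\bZ$), with surjectivity onto $\bZ/p^\infty$ forced by $\Gamma$ having no nontrivial finite quotients. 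Your version is shorter and avoids both the reduction to a monothetic subgroup and the structure theory of the discrete circle, at the cost of invoking injectivity of divisible abelian groups (Baer's criterion); the paper's version is more concrete and additionally locates the copy of $\bZ_p$ inside an explicit monothetic subgroup, though that extra information is not needed for the statement. Both arguments rely on Zorn's lemma at some point, so neither is more constructive than the other.
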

\begin{proof}
  First, we may replace $G$ with the compact subgroup $H$ generated by
  a single element $g$ with $\tau(g)$ nontrivial. Every character on
  $H$ is uniquely determined by its value at $g$, and hence the
  Pontryagin dual $\widehat{H} = \Gamma$ can be identified with a
  subgroup of the discrete circle group $\bS^1$. Since $H$ is
  torsion-free, $\Gamma$ has no nontrivial finite quotients. The Pontryagin dual of $\bZ_p$ is the group
  $\bZ/p^\infty$ of roots of unity whose orders are powers of $p$, so it
  suffices to prove that there is a surjection
  $\Gamma \to \bZ/p^\infty$ which does not annihilate $\tau|_H$.

  Regarded as a discrete abelian group, $\bS^1$ is the direct sum of
  one copy of $\bZ/p^\infty$ for each prime $p$, as well as continuum
  many copies of $\bQ$: 

\begin{equation}\label{eq:circ}
  \bS^1\cong \bigoplus_p (\bZ/p^\infty)\oplus \bQ^{\oplus 2^{\aleph_0}}.
\end{equation}
Since $\Gamma$ embeds into $\bS^1$, $\tau|_H$ has nontrivial image
under a map from $\Gamma$ to one of the summands in
\Cref{eq:circ}. There are now two cases to consider.

{\bf (1) A morphism $\Gamma\to \bZ/p^\infty$ does not annihilate
  $\tau|_{H}$.} Since $\Gamma$ has no nontrivial
finite quotients, and there are no proper infinite subgroups of
$\bZ/p^\infty$, we obtain the desired surjection
$\Gamma\to \bZ/p^\infty$.

{\bf (2) A morphism $\Gamma\to \bQ$ does not annihilate $\tau|_H$.} If
necessary, we may rescale $\bQ$ so that $\tau|_H$ is not mapped into
$\bZ$. We then have a map
\begin{equation*}
  \Gamma\to \bQ/\bZ\cong \bigoplus_p (\bZ/p^\infty)
\end{equation*}
which does not annihilate $\tau|_H$. By selecting an appropriate
summand, we can now continue as in case (1), completing the proof.
\end{proof}

An embedding $\mathbb{Z}_p \hookrightarrow G$ provides a reduction of \Cref{conj.BDH} and the classical subcase of \Cref{conj.BDH2} Type 1.

\begin{lemma}\label{le.red}
  If \Cref{conj.BDH} holds for the compact groups $\bZ_p$ for all
  primes $p$, then it holds in general. Similarly, if
  \Cref{conj.BDH2}, Type 1 holds for all $H = \cC(\bZ_p)$, then it
  holds whenever $H = \cC(G)$ for some nontrivial compact group $G$.
\end{lemma}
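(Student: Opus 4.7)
The plan is to reduce a general nontrivial compact group $G$ to the groups $\bZ_p$ in two moves: first pass to the torsion-free case using the already-known results, and then ``pull back'' a hypothetical counterexample along an embedding $\bZ_p\hookrightarrow G$ produced by \Cref{pr.Zp_sbgp}. If $G$ has a nontrivial torsion element, then \Cref{conj.BDH} holds by \cite[Corollary 3.1]{volovikovindex} and the classical case of \Cref{conj.BDH2} Type 1 holds by \cite[Corollary 2.4]{saturated}, so I may assume $G$ is torsion-free and nontrivial. For any $g\ne e$ in $G$, the closed subgroup $\overline{\langle g\rangle}\subseteq G$ is compact, abelian, and torsion-free, so \Cref{pr.Zp_sbgp} supplies an embedding $\iota:\bZ_p\hookrightarrow G$ for some prime $p$.

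The topological statement is then straightforward. Given a counterexample for $G$ consisting of a free $G$-action on a compact Hausdorff $X$ and an equivariant map $F:X*G\to X$, the restricted $\bZ_p$-action on $X$ is still free, and $\iota$ induces a $\bZ_p$-equivariant continuous map $X*\bZ_p\to X*G$, well-defined on the quotient because the identifications at $t=0$ and $t=1$ of the domain are carried into those of the codomain. Precomposing $F$ with this map yields a $\bZ_p$-equivariant map $X*\bZ_p\to X$, contradicting \Cref{conj.BDH} for $\bZ_p$.

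For the Type 1 statement the inclusion $\iota$ dualizes to a surjective Hopf $*$-homomorphism $\pi:\cC(G)\to\cC(\bZ_p)$. Given a free coaction $\delta:A\to A\otimes\cC(G)$, I would set $\delta':=(\id\otimes\pi)\circ\delta$ and verify freeness of $\delta'$ directly from \Cref{eq:cofree}, using that the image of $\id\otimes\pi$ is dense in $A\otimes\cC(\bZ_p)$. Applying $\id\otimes\pi$ pointwise to $\cC([0,1],A\otimes\cC(G))$ carries $A\circledast_\delta\cC(G)$ into $A\circledast_{\delta'}\cC(\bZ_p)$, since the boundary conditions $\bC\otimes\cC(G)$ and $\delta(A)$ map into $\bC\otimes\cC(\bZ_p)$ and $\delta'(A)$ respectively. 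The resulting map on joins intertwines $\delta_\Delta$ with $\delta'_{\Delta'}$ via the Hopf identity $(\pi\otimes\pi)\circ\Delta=\Delta'\circ\pi$, so composition with a hypothetical equivariant morphism $A\to A\circledast_\delta\cC(G)$ produces an equivariant morphism $A\to A\circledast_{\delta'}\cC(\bZ_p)$, contradicting Type 1 for $\bZ_p$. The only delicate bookkeeping is the equivariance check for the induced map on joins, but this is a routine diagram chase from the Hopf structure; I do not foresee a serious obstacle.
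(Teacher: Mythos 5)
Your proposal is correct and follows essentially the same route as the paper: dispose of the torsion case via the cited corollaries, produce an embedding $\bZ_p\hookrightarrow G$ using \Cref{pr.Zp_sbgp}, and restrict (push forward) the hypothetical counterexample along that embedding. The only cosmetic difference is that you work with the equivariant join and check its boundary conditions directly, whereas the paper first identifies it with the classical join via \Cref{eq.2joins}; your explicit freeness verification for $\delta'$ fills in a detail the paper leaves implicit, in exactly the expected way.
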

\begin{proof}
  Let $X$, $G$, $A$, and $H = \cC(G)$ be as in \Cref{conj.BDH} and
  \Cref{conj.BDH2}. If $G$ has a nontrivial torsion element, then the
  conjectures already hold by \cite[Corollary 3.1]{volovikovindex} and \cite[Corollary 2.4]{saturated}. Otherwise, a $G$-equivariant map $X*G\to X$
  restricts to a $\bZ_p$-equivariant map $X * \bZ_p \to X$ for some
  subgroup $\bZ_p\le G$ provided by \Cref{pr.Zp_sbgp}. Similarly, a
  $G$-equivariant morphism $A \to A \circledast \cC(G)$ restricts to a
  $G$-equivariant morphism $A \to A \circledast \cC(\bZ_p)$,
  where we note that the classical join and equivariant join are equivariantly isomorphic because of \Cref{eq.2joins}.
\end{proof}

As in \cite[Lemma 2.5]{saturated}, one way to approach the Type 1 conjecture is by iterating any proposed equivariant map $A \to A \circledast \cC(\bZ_p)$ with its joins $A \circledast \cC(\bZ_p) \to A \circledast \cC(\bZ_p) \circledast \cC(\bZ_p)$, etc., producing a chain
\begin{equation}\label{eq.somuchiteration}
A \to A \circledast \cC(\bZ_p) \to A \circledast \cC(\bZ_p) \circledast \cC(\bZ_p) \to A \circledast \cC(\bZ_p) \circledast \cC(\bZ_p) \circledast \cC(\bZ_p) \to \ldots
\end{equation}
of equivariant maps. Compositions then give
\begin{equation*}
A \to A \circledast \cC(\bZ_p) \circledast \cC(\bZ_p) \circledast \cdots \circledast \cC(\bZ_p) \cong A \circledast \cC(\bZ_p^{*n}) \hspace{.2 in} \forall n
\end{equation*}
with equivariant quotients
\begin{equation}\label{eq.iterationtrick}
A \to \cC(\bZ_p^{*n}) \hspace{.2 in} \forall n.
\end{equation}

Freeness of a $\bZ_p$-action on $A$ implies that the saturation
condition of \Cref{prop.saturation} (or, as $\bZ_p$ is abelian, the condition described in \cite[$\S$5]{phillips}) is met; that is, for any character
$\tau \in \widehat{\bZ_p}$ and eigenspace $A_\tau$,
$1 \in \overline{A_\tau A_\tau^*}$. Therefore, there is a finite $m$
and a list $f_1, \ldots, f_m, g_1, \ldots, g_m \in A_\tau$ such that
$\sum_{i=1}^m f_i g_i^*$ is invertible in $A$. The images of these
functions under \Cref{eq.iterationtrick} then show that if an equivariant map $A \to A \circledast \cC(\bZ_p)$ is assumed, then for this fixed
$m$, and for all $n$, there is a list of $m$ $\tau$-eigenfunctions in
$\cC(\bZ_p^{*n})$ with no common zeroes. It would suffice to show
that for some $n$ much larger than $m$, this cannot happen. However,
unlike in the torsion case, it is not obvious if such a contradiction
actually occurs. It is therefore prudent to study the constraints of
equivariant maps $\bZ_p^{*n} \to \bC^{m} \setminus \{0\}$, where
$\bZ_p$ acts on $\bC^{m}$ through a particular character, as $n$
increases. This may be recast as a question about actions of finite
groups.

\begin{lemma}\label{le.approx}
Suppose $G=\varprojlim G_\alpha$ is a filtered limit of compact groups, with a matching limit $X=\varprojlim X_\alpha$ of compact Hausdorff spaces indexed by the same filtered poset. The ordering is such that $\alpha\ge \beta$ implies that there exist maps $X_\alpha\to X_\beta$ and $G_\alpha \to G_\beta$. Further, assume that each $G_\alpha$ acts continuously on $X_\alpha$ so that the diagram
\begin{equation}\label{eq:comp_act}
  \begin{tikzpicture}[auto,baseline=(current  bounding  box.center)]
    \path[anchor=base] (0,0) node (aa) {$G_\alpha\times X_\alpha$} +(2,.5) node (a) {$X_\alpha$} +(4,0) node (b) {$X_\beta$} +(2,-.5) node (bb) {$G_\beta\times X_\beta$};
         \draw[->] (aa) to[bend left=10]  (a);
         \draw[->] (aa) to[bend right=10]  (bb);
         \draw[->] (a) to[bend left=10]  (b);
         \draw[->] (bb) to[bend right=10]  (b);
  \end{tikzpicture}
\end{equation}
commutes for all $\alpha \geq \beta$. Let $E$ be a finite-dimensional unitary representation of a fixed $G_\beta$. Further, let $G$ and $G_\alpha$, $\alpha \geq \beta$, also act on $E$ through the quotient maps $\pi_\beta: G \to G_\beta$ and $\pi_{\alpha \beta}: G_\alpha \to G_\beta$, respectively. Then for any continuous $G$-equivariant map $f: X \to E$ and $\varepsilon > 0$, there exist an $\alpha \geq \beta$ and a continuous $G_\alpha$-equivariant map $f_\alpha: X_\alpha \to E$ such that $\|f- f_\alpha \circ \pi_\alpha\|_\infty<\varepsilon$.
\end{lemma}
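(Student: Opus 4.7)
The plan is a three-step approach: first approximate $f$ by a not-necessarily-equivariant continuous map factoring through some $X_\alpha$, then average that approximation over the $G_\alpha$-action to force equivariance, and finally use unitarity of the representation on $E$ to verify that averaging does not destroy the approximation.

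For the approximation step, I fix a basis $e_1,\ldots,e_d$ of $E$ and write $f = \sum_{i=1}^d f_i \otimes e_i$ with $f_i \in \cC(X)$. The union $\bigcup_\alpha \pi_\alpha^*\cC(X_\alpha) \subseteq \cC(X)$ is a unital $*$-subalgebra separating points of $X = \varprojlim X_\alpha$ (by the universal property of the inverse limit), so Stone--Weierstrass makes it dense. Approximating each $f_i$ in this fashion yields some $\alpha \geq \beta$ and $g_\alpha \in \cC(X_\alpha,E)$ with $\|f - g_\alpha \circ \pi_\alpha\|_\infty < \varepsilon$.

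For the averaging step, set
\begin{equation*}
  f_\alpha(y) := \int_{G_\alpha} h \cdot g_\alpha(h^{-1} \cdot y)\, d\mu_{G_\alpha}(h), \qquad y \in X_\alpha,
\end{equation*}
which is $G_\alpha$-equivariant by translation invariance of Haar measure. Without loss of generality the group projection $G \to G_\alpha$ is surjective (otherwise replace $G_\alpha$ by its closed image, which continues to satisfy the hypotheses), so the pushforward of Haar measure on $G$ is Haar measure on $G_\alpha$. Combined with the compatibility in \Cref{eq:comp_act}, this rewrites
\begin{equation*}
  f_\alpha(\pi_\alpha(x)) = \int_G g \cdot g_\alpha(\pi_\alpha(g^{-1}x))\, d\mu_G(g);
\end{equation*}
since $f$ itself is $G$-equivariant, one also has trivially $f(x) = \int_G g \cdot f(g^{-1}x)\, d\mu_G(g)$. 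Subtracting and using that each $g$ acts on $E$ by a unitary (via its image in $G_\beta$),
\begin{equation*}
  \|f(x) - f_\alpha(\pi_\alpha(x))\| \leq \int_G \|f(g^{-1}x) - g_\alpha(\pi_\alpha(g^{-1}x))\|\, d\mu_G(g) \leq \|f - g_\alpha \circ \pi_\alpha\|_\infty < \varepsilon.
\end{equation*}

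The main subtlety I anticipate is the surjectivity reduction and the accompanying pushforward-of-Haar-measure identity; once these are in place, the rest is a formal calculation turning on the fact that averaging by unitaries does not inflate the sup norm. The Stone--Weierstrass step is routine, and the extension from scalar- to $E$-valued functions is immediate because $E$ is finite-dimensional.
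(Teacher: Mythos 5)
Your proposal is correct and follows essentially the same route as the paper: Stone--Weierstrass density of the functions factoring through some $X_\alpha$, followed by Haar-measure averaging, with unitarity of the $E$-action guaranteeing the averaging does not increase the sup norm. The only (cosmetic) difference is that the paper averages over $G$ acting on maps $X\to E$ and then observes the result factors through $\pi_\alpha$, whereas you average over $G_\alpha$ directly and compare via the pushforward of Haar measure under the (surjective, as the statement's word ``quotient'' indicates) projection $G\to G_\alpha$ --- the two computations are identical in content.
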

\begin{proof}
  Approximation by {\it arbitrary} functions $X_\alpha\to E$ follows from the Stone-Weierstrass Theorem: the complex-valued, continuous functions on $X$ that factor through some $X_\alpha$ (where the choice of $\alpha$ may depend on the function) form a complex unital $*$-algebra that separates points, because $X=\varprojlim X_\alpha$. It follows that for any $\varepsilon>0$, we can find some $\psi_\alpha:X_\alpha\to E$ for which 
\begin{equation}
  \label{eq:epsilon}
\|f-\psi_\alpha \circ \pi_\alpha\|_\infty<\varepsilon,
\end{equation}
and we may also assume $\alpha \geq \beta$. If $\mu$ denotes the Haar measure on $G$, then the averaging procedure 
\begin{equation*}
  \varphi\mapsto \varphi\triangleleft\mu :=\int_G g \varphi(g^{-1}\bullet)\ \mathrm{d}\mu(g) \in \cC(X,E)
\end{equation*}
produces $G$-equivariant maps, does not increase supremum norms, and
fixes $f$ (because $f$ is already $G$-equivariant). In conclusion,
applying $\triangleleft\mu$ to \Cref{eq:epsilon} produces
\begin{equation*}
  \|f-(\psi_\alpha \circ \pi_\alpha) \triangleleft\mu\|_\infty<\varepsilon.
\end{equation*}
Because $\alpha \geq \beta$ and the $G$-action on $E$ factors through
$\pi_\beta$, $(\psi_\alpha \circ \pi_\alpha)\triangleleft\mu$ must
factor through $\pi_\alpha$ as a composition
$f_\alpha \circ \pi_\alpha$. It follows that $f_\alpha$ is
$G_\alpha$-equivariant and
$\|f- f_\alpha \circ \pi_\alpha\|_\infty<\varepsilon$.
\end{proof}

The assumptions of \Cref{le.approx} are frequently satisfied when $G$
is a pro-$p$ group, such as $\bZ_p$, as any finite-dimensional unitary
representation $E$ of $G$ will factor through one of the $p$-group
quotients $G_\alpha$, such as $\bZ/p^k$. The role of $X_\alpha$ may
then be played by an iterated join of $G_\alpha$, whose inverse limit
is the iterated join of $G$. Moreover, following the techniques of
Dold in \cite{dold}, if we fix a primitive $p^k$ root of unity
$\omega$ and note that the iterated join $(\bZ/p^k)^{*2n}$ is
$(2n-2)$-connected, it follows that there is a map
$\bS^{2n-1} \to (\bZ/p^k)^{*2n}$ which is equivariant for the rotation
action $\vec{z} \mapsto \omega \vec{z}$ on the sphere and the diagonal
action of $\bZ/p^k$ on its iterated join.

Having replaced the relevant domain of an equivariant map with a
sphere, we may also view the unit sphere
$\bS^{2m-1} \subset \bC^m \setminus \{0\}$ as the codomain through
scaling, leading us to the following question.

\begin{question}\label{ques.solenoidal}
  For positive integers $k$, $r$, $m$, and $n$ with $k > r$, let
  $\bZ/p^k$ act on the sphere $\bS^{2n-1}$ freely through
  multiplication by $e^{2 \pi i / p^k}$ and non-freely on the sphere
  $\bS^{2m-1}$ through multiplication by $e^{2 \pi i / p^r}$. Call an
  equivariant map $\bS^{2n-1} \to \bS^{2m-1}$ for these actions a
  $(p^k, p^r)$-map.

\vspace{.12 in}

\noindent Fix $m$. Do there exist $n$ and $r$ such that for all $k > r$, $(p^k, p^r)$-maps $\bS^{2n-1} \to \bS^{2m-1}$ do not exist?
\end{question}

If for all $m \geq 2$ and primes $p$, the answer to
\Cref{ques.solenoidal} is yes, then the Type 1 conjecture immediately
follows, so resolution of \Cref{ques.solenoidal} is one possible
approach to the conjecture. These types of questions have been
studied, but to our knowledge no tight dimension bounds have been
published that apply for large $k$. Precise bounds have been found for
$k = 2$, $r = 1$, and scale information is known as $k \to \infty$. We
follow the notation of \cite{meyer}:
\begin{equation*}
v_{p,k}(n) = \mathrm{min} \{m \in \bZ^+: \textrm{there exists a } (p^k, p)\textrm{-map } \bS^{2n-1} \to \bS^{2m-1} \}.
\end{equation*}
Theorem 4.8 of \cite{meyer} states that for any odd prime $p$, $v_{p,2}(1) = 1$ and
\begin{equation}\label{eq.level}
\begin{array}{cccccccccc} \left\lceil \cfrac{n-2}{p} \right\rceil + 1 & \leq & v_{p, 2}(n) & \leq & \left\lceil \cfrac{n-2}{p} \right\rceil + 2 & \textrm{ for } & n \not\equiv 2 \textrm{ mod } p \\ \\ & & v_{p,2}(n) & = & \left\lceil \cfrac{n-2}{p} \right\rceil + 2 & \textrm{ for } & n \equiv 2 \textrm{ mod } p. \end{array}
\end{equation}
Similar bounds for $p = 2$ may be found in \cite{stolz}; the case $p = 2$ is unique, as the usual $\bZ/2$ antipodal action exists on even spheres as well as odd. Moreover, Theorem 5.5 of \cite{meyer} is equivalent to the claim that
\begin{equation}
\lim_{n \to \infty} \cfrac{v_{p,k}(n)}{n} = \cfrac{1}{p^{k-1}},
\end{equation}
so for large spheres $\bS^{2n-1}$, there is a large gap between $n$ and the
smallest $m$ with $(p^k, p)$-maps $\bS^{2n-1} \to \bS^{2m-1}$, roughly
$n/m \approx p^{k-1}$. This does not answer
\Cref{ques.solenoidal} for $r = 1$, as this behavior might only manifest
in large dimension, and the number dual to $v_{p,k}$ for fixed
\textit{codomain},
\begin{equation}
w_{p,k}(m) = \mathrm{max} \{n \in \bZ^+: \textrm{there exists a } (p^k, p)\textrm{-map } \bS^{2n-1} \to \bS^{2m-1} \},
\end{equation}
might remain bounded in $k$. It is also known from
\cite[Theorem 1.2]{bartsch} that
\begin{equation}\label{eq.bartsch}
v_{p,k}(n) \geq \left\lceil \cfrac{n-1}{p^{k-1}} \right\rceil + 1,
\end{equation}
so boundedness of the set $\{w_{p,k}(m)\}_{k \in \bZ^+}$ for
individual $m$ could follow, for instance, if the constant term $1$ in
\Cref{eq.bartsch} can be replaced with an unbounded, sublinear function of $n$ alone.

Even if \Cref{ques.solenoidal} has a negative answer, it is possible that the Type 1 conjecture may still hold. In what follows, we consider a related approach that is less demanding than \Cref{ques.solenoidal}.

\begin{proposition}
\Cref{conj.BDH} and the classical subcase $H = \cC(G)$ of \Cref{conj.BDH2} Type 1 are equivalent.
\end{proposition}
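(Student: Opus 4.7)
The plan is to prove both directions, with the non-trivial half extracting a topological Borsuk--Ulam contradiction by restricting to the character space of $A$.

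The implication \Cref{conj.BDH2} Type 1 (classical subcase) $\Rightarrow$ \Cref{conj.BDH} is a routine specialization. For a compact Hausdorff $G$-space $X$ with free action, the unital $C^*$-algebra $A := \cC(X)$ inherits a free $\cC(G)$-coaction, and the identification $\cC(X) \circledast \cC(G) \cong \cC(X * G)$ recalled around \Cref{eq.2joins} converts an equivariant continuous map $X * G \to X$ into an equivariant unital $*$-homomorphism $\cC(X) \to \cC(X) \circledast_\delta \cC(G)$.

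For the converse, I assume \Cref{conj.BDH} and suppose for contradiction that a unital equivariant $*$-homomorphism $\phi: A \to A \circledast_\delta \cC(G)$ exists, for some unital $A$ with free $\cC(G)$-coaction $\delta$. Set $X := \mathrm{Char}(A)$. \emph{First}, evaluating $\phi$ at $t = 0$ produces a unital $*$-homomorphism $\mathrm{ev}_0 \circ \phi : A \to \bC \otimes \cC(G) \cong \cC(G)$ (using the boundary condition in \Cref{eq:nc}); point evaluations on $G$ then supply characters of $A$, so $X \ne \emptyset$. \emph{Second}, the induced $G$-action on $X$ is free: applying $\chi \otimes \mathrm{id}$ to the Ellwood density condition \Cref{eq:cofree} and identifying $(\chi \otimes \mathrm{id})\delta(y)$ with the function $g \mapsto \chi(\alpha_g(y))$, the resulting functions span a dense subspace of $\cC(G)$ as $y$ varies over $A$, so they separate points of $G$ and the orbit map $g \mapsto g\cdot\chi$ is injective for every $\chi \in X$. \emph{Third}, pointwise abelianization in the first tensorand induces a $G$-equivariant surjection $A \circledast_\delta \cC(G) \twoheadrightarrow A^{\mathrm{ab}} \circledast \cC(G) \cong \cC(X * G)$ compatible with both boundary conditions; composing with $\phi$ and then factoring the result through the commutative quotient $A \twoheadrightarrow A^{\mathrm{ab}} = \cC(X)$ (legitimate since the codomain is commutative) yields an equivariant unital $*$-homomorphism $\cC(X) \to \cC(X * G)$. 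Dualizing gives an equivariant continuous $X * G \to X$, contradicting \Cref{conj.BDH}.

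The mild technical work lies in the third step: one has to check that pointwise abelianization $A \otimes \cC(G) \twoheadrightarrow A^{\mathrm{ab}} \otimes \cC(G)$ respects the boundary conditions of the join (it does, because $\bC \otimes \cC(G)$ is already commutative and $\delta$ descends to a coaction $\delta^{\mathrm{ab}}$ on $A^{\mathrm{ab}}$ thanks to $\cC(G)$ being commutative), and that $A^{\mathrm{ab}} \circledast \cC(G)$ is the classical commutative join $\cC(X * G)$. I do not expect any serious obstruction here; the conceptual content is concentrated in the non-emptiness and freeness of the $G$-action on $X$.
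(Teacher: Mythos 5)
Your proof is correct and follows essentially the same route as the paper's: the nontrivial direction is handled in both cases by passing to the largest commutative quotient $A^{\mathrm{ab}} = \cC(X)$, observing that the composite $A \to A \circledast_\delta \cC(G) \twoheadrightarrow A^{\mathrm{ab}} \circledast \cC(G) \cong \cC(X*G)$ factors through $A^{\mathrm{ab}}$ because the codomain is commutative, and dualizing. The only (harmless) differences are that the paper first reduces to $G = \bZ_p$ via \Cref{le.red} before abelianizing, and verifies freeness of the induced action on $X$ by examining spectral subspaces rather than by your direct (and equally valid) application of $\chi \otimes \mathrm{id}$ to the Ellwood condition \Cref{eq:cofree}.
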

\begin{proof}
We follow the idea of \cite{saturated}. \Cref{conj.BDH2} Type 1 implies \Cref{conj.BDH}, so assume that \Cref{conj.BDH2} Type 1 fails for $H = \cC(G)$. By \Cref{le.red} there is then a free action $\alpha$ of $\bZ_p$ on a unital $C^*$-algebra $A$ and an equivariant morphism $A \to A \circledast \cC(\bZ_p)$. This implies that the largest commutative quotient $A \twoheadrightarrow B$ is nontrivial, and the action $\alpha$ descends to an action $\beta$ on $B$. From examination of spectral subspaces, we see that $\beta$ is free. The composition $A \to A \circledast \cC(\bZ_p) \twoheadrightarrow B \circledast \cC(\bZ_p)$ descends to an equivariant morphism $B \to B \circledast \cC(\bZ_p)$, as the codomain is commutative. This is dual to an equivariant map $X * \bZ_p \to X$ for the corresponding free action of $\bZ_p$ on $X \not= \emptyset$, so \Cref{conj.BDH} also fails.
\end{proof}

\begin{question}\label{ques.infiniteiteration}
Suppose $X$ is a compact Hausdorff space with a free action of $\bZ_p$. Equip $\bZ_p^{*n}$ with the diagonal action and take the direct limit $\bZ_p^{*\infty}$, with the diagonal action as well. Is it possible for an equivariant map $\bZ_p^{*\infty} \to X$ to exist?
\end{question}

If the answer is always no, then the Type 1 conjecture follows, as the topological iteration dual to \Cref{eq.somuchiteration} takes an equivariant map $X * \bZ_p \to X$ and produces a chain
\begin{equation*}
\cdots \to X * \bZ_p * \bZ_p \to X * \bZ_p \to X,
\end{equation*}
which gives equivariant maps $\bZ_p^{*n} \to X$ for all $n$. Similar to \cite[Alternative Proof B]{saturated}, these maps are consistent with the inclusions of $\bZ_p^{*n}$ into $\bZ_p^{*n+1} = \bZ_p * \bZ_p^{*n}$, so they extend to an (equivariant) map $\bZ_p^{*\infty} \to X$, which would give a contradiction based on the assumed negative answer to \Cref{ques.infiniteiteration}.

Because $\bZ_p^{*\infty}$ is a non-compact Tychonoff space, its Stone-\v{C}ech compactification $\beta \bZ_p^{*\infty}$ exists. Now, $\beta \bZ_p^{*\infty}$ is supplied with a \textit{not necessarily continuous} diagonal action of $\bZ_p$, as seen by following the universal property of the Stone-\v{C}ech compactification under each individual homeomorphism $x \mapsto x \cdot g$. Any equivariant map $\bZ_p^{*\infty} \to X$ extends to the Stone-\v{C}ech compactification, dual to a morphism $\cC(X) \to \cC_b(\bZ_p^{*\infty})$. Perhaps this is ruled out by limitations on the spectral subspaces.

\begin{question}\label{ques.eigenfunctionsinfiniteiteration}
Consider $\cC(\beta \bZ_p^{*\infty}) = \cC_b(\bZ_p^{*\infty})$ with the (not necessarily continuous) diagonal action of the compact group $\bZ_p$. For some $\tau \in \widehat{\bZ_p}$, does it hold that $1 \not\in \overline{\cC_b(\bZ_p^{*\infty})_\tau^* \hspace{2 pt} \cC_b(\bZ_p^{*\infty})_\tau}$?
\end{question}

\section{Correction of the Literature}

In a previous version of this manuscript, we proposed a solution to Type
1 of \Cref{conj.BDH2} using Claim 2.6 of \cite{tver}, which we repeat
here.

\begin{claim}[\cite{tver} Claim 2.6, Erroneous]\label{cl}
  Let $q \ge 2$ be a prime power, $d\ge 1$ an odd integer, and
  $N=(q-1)(d+1)$. For a group $G$ of order $q$, let $E$ be a unitary
  $G$-representation of (complex) dimension $N/2$ with no trivial
  subrepresentations. Then every $G$-equivariant map $G^{*(N+1)}\to E$
  has a zero.
\end{claim}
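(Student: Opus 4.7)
The natural strategy is equivariant obstruction theory: a $G$-equivariant map $f: G^{*(N+1)} \to E$ with no zero would factor through $E \setminus \{0\}$, which $G$-equivariantly deformation retracts onto the unit sphere $S(E)$. Since $G^{*(N+1)}$ is an $(N-1)$-connected free $G$-CW complex of real dimension $N$, while $S(E)$ has real dimension $N-1$, one expects an obstruction living in equivariant cohomology that is essentially the $G$-equivariant Euler class of the bundle $E\to\mathrm{pt}$, pulled back through the classifying map $G^{*(N+1)}/G \to BG$. The plan is to exhibit this Euler class as a nonzero element of $H^N_G(\mathrm{pt})$ with appropriate coefficients, and to confirm that it remains nonzero after the pullback, yielding a contradiction.

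To implement this I would first reduce to the case $G$ abelian, since groups of prime power order are nilpotent and transfer/restriction to a suitable abelian subgroup preserves both the hypothesis and the relevant Euler class. For $G = \bZ/p^k$, the relevant equivariant cohomology is concentrated on a single polynomial generator $\beta$ of degree $2$, and the Euler class of a nontrivial character $\chi$ is a nonzero multiple of $\beta$ recording the weight of $\chi$ modulo $p^k$. Decomposing $E \cong \chi_1 \oplus \cdots \oplus \chi_{N/2}$ with no $\chi_i$ trivial, the Euler class becomes $\left(\prod_i m_i\right)\beta^{N/2}$, which the argument wants to be nonzero in $H^N_G(\mathrm{pt})$. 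An analogous computation applies to elementary abelian $G = (\bZ/p)^k$ using the multi-variable polynomial cohomology ring of $BG$, where one multiplies first Chern classes of the relevant characters.

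The main obstacle, which I suspect is exactly where \Cref{cl} runs into trouble (hence its labeling as erroneous), is that "no trivial subrepresentation" is significantly weaker than "$S(E)$ is $G$-free" when $q$ is a proper prime power. Individual characters $\chi_i$ are allowed to have nontrivial kernels inside $G$, and the weights $m_i$ can be arranged so that $\prod_i m_i \equiv 0 \pmod{p^k}$, forcing the putative Euler class obstruction to vanish. I would therefore expect a counterexample of the following shape: take $q = p^k$ with $k \geq 2$, choose nontrivial characters whose weights multiply to zero in $\bZ/p^k$, and leverage triviality of the obstruction class to construct an explicit nowhere-vanishing equivariant map into $S(E)$ by extending cell-by-cell. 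Given the section heading \emph{Correction of the Literature}, I anticipate the authors produce precisely such a counterexample rather than attempting to rescue a positive proof.
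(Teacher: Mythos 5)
You correctly read the situation: \Cref{cl} is false for $q=p^k$ with $k\ge 2$, the paper's ``proof'' is a refutation, and your diagnosis of \emph{why} it fails coincides with the paper's analysis of \Cref{cl-bis}: the top Chern/Euler class of the associated bundle is $\bigl(\prod_i m_i\bigr)y^{N/2}$ in $H^{N}(B(\bZ/p^k);\bZ)\cong\bZ/p^k$, and with all weights equal to $p^{k-1}$ (an order-$p$ character) this is $(p^{k-1})^{N/2}y^{N/2}=0$ once $k\ge 2$ and $N\ge 6$, which always holds here since $N=(q-1)(d+1)\ge 6$. Where you diverge is in how the nowhere-vanishing equivariant map is actually produced. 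You propose to conclude existence from the vanishing of the obstruction by equivariant obstruction theory; this does work, but you should say explicitly that the Euler class is the \emph{complete} obstruction here because $G^{*(N+1)}/G$ is a CW complex of dimension exactly $N=\mathrm{rank}_{\bR}(E)$ and $G$ acts on $E$ by unitaries (so the coefficient system $\pi_{N-1}(S(E))$ is untwisted); without that remark the step from ``primary obstruction vanishes'' to ``the map exists'' is incomplete. The paper instead builds the map concretely: Floyd's power map $(r_ju_j)\mapsto(r_ju_j^{p^{k-1}})$ on $\bS^{2m-1}$ is a $(p^k,p)$-map of degree $p^{m(k-1)}$, a multiple of $p^k$, hence can be modified along free orbits to a null-homotopic $(p^k,p)$-map; composing with a Dold-style cell-by-cell equivariant map $(\bZ/p^k)^{*2m+1}\to\bS^{2m-1}*\bZ/p^k$ yields an explicit equivariant map into $E\setminus\{0\}$ for $E=\bC^m$ with an order-$p$ character. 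The explicit route buys concrete $(p^k,p)$-maps (relevant to \Cref{ques.solenoidal} and consistency checks against the lens-space literature), whereas your route is shorter but purely existential. Two minor points: your preliminary ``reduction to the abelian case'' is unnecessary for a counterexample (one simply takes $G=\bZ/p^k$ cyclic outright), and you never verify that weights multiplying to $0$ bmod $p^k$ are achievable for the specific $N$ of the claim --- they are, as noted above, but that check belongs in the argument.
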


We are grateful to Robert Edwards for pointing out to us that this
claim is incorrect. Namely, the proof given in \cite{tver} only
concerns the case $G = \bigoplus\limits_{i=1}^k \bZ/p$, and for
$G = \bZ/p^k$ the result fails with a counterexample that may be found
by modifying \cite[p. 153-154]{wil} (due to
Floyd).

For $m \geq 2$, define a $(p^k, p)$-map $f$ on $\bS^{2m-1}$ in polar form by
\begin{equation*}
f: (z_1, \ldots, z_m) = (r_1 u_1, \ldots, r_m u_m) \in \bS^{2m-1} \mapsto (r_1 u_1^{p^{k-1}}, \ldots, r_m u_m^{p^{k-1}}) \in \bS^{2m-1},
\end{equation*}
so that $f$ has degree $p^{m(k-1)}$. Since
this degree is a multiple of $p^k$, $f$ may be modified along its free
orbits to produce a $(p^k, p)$-map $h: \bS^{2m-1} \to \bS^{2m-1}$
which is homotopically trivial. Let $X = \bS^{2m-1} * \bZ/p^k$ and
equip $X$ with the free diagonal action of $\bZ/p^k$, so that the
homotopically trivial map $h$ produces a $(p^k,p)$-map from $X$ to
$\bS^{2m-1}$. Further, $X$ is $(2m-1)$-connected, so there is an
equivariant map $(\bZ/p^k)^{*2m+1} \to X$, which may be constructed
cell-by-cell as in \cite{dold}. Composition of these maps gives a
$(p^k,p)$-map $(\bZ/p^k)^{*2m+1} \to \bS^{2m-1}$.

Finally, if $m \geq 2$ and $E = \bC^m$ is equipped with the
representation of $\bZ/p^k$ through an order $p$ character, there is
an equivariant map $(\bZ/p^k)^{*2m+1} \to E \setminus \{0\}$. Since
$E$ has no trivial subrepresentations, this produces many
counterexamples to the claim, such as for $p = 2$, $k = 2$, $q = 4$,
$m = 3$, $d = 1$, $N = 6$. Moreover, \Cref{cl} is actually
inconsistent with results in \cite{meyer} and \cite{stolz}, and a
reasonably explicit construction of a $(4,2)$-equivariant
counterexample may be found by manipulating \cite[Example
5.1]{jaworowski}, which produces a $(4,2)$-map $\bS^3 \to \bS^2$ and
consequently a $(4,2)$-map $\bS^3 \to \bS^3$ which is homotopically
trivial.

The main issue with \Cref{cl} can be traced back to the proof in
\cite{tver} for elementary abelian $p$-groups, making it clear how
this proof fails to apply to other classes of $p$-groups (and
specifically to $G=\bZ/p^k$, the case we are interested in here). The
line of reasoning followed in \cite{tver} translates \Cref{cl} into
the language of the equivalent \cite[Claim 4.9]{tver}, which in our
case demands the following.

\begin{claim}[Erroneous]\label{cl-bis}
  Let $G=\bZ/p^k$, $\chi$ a character of $G$ with kernel
  $\bZ/p^{k-1}$, and $E=\chi^{\oplus\frac N2}$ for $N$ as in \Cref{cl}. Then the top Chern class of the bundle on $G^{*(N+1)}/G$ associated
  to $E$ is non-vanishing.
\end{claim}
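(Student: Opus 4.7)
The plan is to translate the Chern-class non-vanishing into an arithmetic condition in $H^*(BG;\bZ)$ and to check whether it is met.

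First, I would pass from the finite approximation to the full classifying space. The bundle on $G^{*(N+1)}/G$ associated to $E$ is the pullback, along the natural map $G^{*(N+1)}/G \to BG$, of the analogous bundle on $BG$. By naturality of Chern classes, if $c_{N/2}(E)$ vanishes in $H^N(BG;\bZ)$ then its pullback vanishes in $H^N(G^{*(N+1)}/G;\bZ)$ as well, so it suffices to carry out the computation on $BG$.

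Next, I would apply the Whitney sum formula: the bundle associated to $E = \chi^{\oplus N/2}$ is $L_\chi^{\oplus N/2}$, where $L_\chi = EG \times_G \bC_\chi$, and hence
\[
c_{N/2}(E) \;=\; c_1(L_\chi)^{N/2}.
\]
For $G = \bZ/p^k$ the integral cohomology is $H^*(BG;\bZ) = \bZ[x]/(p^k x)$ with $|x|=2$, where $x = c_1(L_\eta)$ for a faithful character $\eta$; the assignment $\chi \mapsto c_1(L_\chi)$ is the Pontryagin-dual isomorphism $\widehat G \xrightarrow{\sim} H^2(BG;\bZ)$. Because $\chi$ has kernel $\bZ/p^{k-1}$, it has order exactly $p$, so $\chi = \eta^{p^{k-1} m}$ with $\gcd(m,p) = 1$ and $c_1(L_\chi) = p^{k-1} m \cdot x$.

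The main obstacle is the final arithmetic step: we would need $(p^{k-1} m)^{N/2} x^{N/2}$ to be non-zero in $H^N(BG;\bZ) \cong \bZ/p^k$, i.e.\ the $p$-adic valuation $(k-1) \cdot N/2$ strictly less than $k$, equivalently $N < 2k/(k-1)$. Since $N = (q-1)(d+1)$ is even and $\geq 2$ while $2k/(k-1) \leq 4$ for every $k \geq 2$, this inequality fails as soon as $N \geq 4$. Hence the computation returns zero in essentially all cases of interest; the proposed route to \Cref{cl-bis} dead-ends, which is precisely where the elementary abelian proof in \cite{tver} cannot be transported: in $(\bZ/p)^{\oplus s}$ the first Chern class of a non-trivial character is a polynomial generator and hence not $p$-divisible, while for cyclic $p$-groups the character $\chi$ of order $p$ forces the $p^{k-1}$ factor that kills the product.
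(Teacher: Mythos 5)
Your computation is correct and matches the paper's own treatment of \Cref{cl-bis}: both identify the top Chern class as $(p^{k-1}x)^{N/2}$ in $H^*(B\bZ/p^k;\bZ)\cong\bZ[x]/(p^kx)$ and observe that $p^k$ divides $(p^{k-1})^{N/2}$ in the relevant range (here $q=p^k$ not prime forces $N\ge 6$), so the class vanishes and the claim is refuted rather than proved. Your closing remark contrasting this with the elementary abelian case, where $c_1$ of a nontrivial character is not $p$-divisible, is exactly the diagnosis the paper gives for why the argument of \cite{tver} does not transport to $\bZ/p^k$.
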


However, examining the proof following \cite[Claim 4.9]{tver}, we see
that in fact the Chern class in question is $(p^{k-1}y)^{\frac N 2}$
in the cohomology ring
\begin{equation*}
  H^*(\bZ/p^k,\bZ) \cong \bZ[y]/(p^ky),\quad \mathrm{deg}(y)=2.
\end{equation*}
For $k\ge 2$ and $N\ge 2$ this vanishes because $p^k$ divides
$(p^{k-1})^{\frac N2}$. In conclusion, we have the following strong
negation of \Cref{cl,cl-bis}, preserving the notation and conventions
therein.

\begin{proposition}
  Let $q$ and $N$ be as in \Cref{cl}, where $q$ is not prime. Then for
  an order-$p$ character $\chi$ of $\bZ/q$, there exists a
  nowhere-vanishing $(\bZ/q)$-equivariant map
  $(\bZ/q)^{*(N+1)}\to \chi^{\oplus\frac N2}$.
\end{proposition}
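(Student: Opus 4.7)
The plan is to execute the Floyd-style construction already sketched in the paragraph preceding the proposition, adjusting the ambient dimension so that the output lands in the specific representation $\chi^{\oplus N/2}$. Write $q=p^k$ with $k\ge 2$ and set $m=N/2$; since $q\ge 4$ and $d+1\ge 2$ is even, $N$ is an even integer with $N\ge 6$, so $m\ge 3$. A radial retraction onto the unit sphere of $\chi^{\oplus m}\cong\bC^m$ reduces the proposition to exhibiting a $(q,p)$-equivariant map $(\bZ/q)^{*(N+1)}\to\bS^{2m-1}$ in the sense of \Cref{ques.solenoidal}, with $\bZ/q$ acting on $\bS^{2m-1}$ through $\chi$.

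First, I would use the polar map
\begin{equation*}
f:\bS^{2m-1}\to\bS^{2m-1},\qquad f(r_1u_1,\dots,r_mu_m) = (r_1u_1^{p^{k-1}},\dots,r_mu_m^{p^{k-1}}),
\end{equation*}
which is $(q,p)$-equivariant for the free action $\vec z\mapsto e^{2\pi i/p^k}\vec z$ on the domain and the $\chi$-action on the target, and has degree $p^{m(k-1)}$. Since $m,k\ge 2$ force $m(k-1)\ge k$, this degree is a multiple of $p^k=q$. Standard equivariant surgery on a free $\bZ/q$-orbit adjusts the degree by multiples of $q$ while preserving $(q,p)$-equivariance, so I would cancel the degree entirely and obtain a $(q,p)$-equivariant $h:\bS^{2m-1}\to\bS^{2m-1}$ of degree $0$, which is non-equivariantly null-homotopic.

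Next, view $X:=\bS^{2m-1}*\bZ/q$ as the union of $q$ cones on $\bS^{2m-1}$ indexed by apices in $\bZ/q$, with the diagonal $(\bZ/q)$-action permuting the apices freely. A non-equivariant null-homotopy of $h$ defines an extension over one cone; equivariance, unobstructed because the apex action is free, propagates it to every other cone in a compatible way at the common base $\bS^{2m-1}$, producing a $(q,p)$-equivariant $H:X\to\bS^{2m-1}$. The join formula yields connectivity $(2m-2)+(-1)+2=2m-1$ for $X$, while $(\bZ/q)^{*(N+1)}=(\bZ/q)^{*(2m+1)}$ is a free $(\bZ/q)$-CW complex of dimension $2m$, so equivariant obstruction theory \`a la Dold produces an equivariant $G:(\bZ/q)^{*(2m+1)}\to X$, all obstructions vanishing because $\pi_{d-1}(X)=0$ for $d\le 2m$. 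The composition $H\circ G$ is the desired map.

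The main technical point is the equivariant surgery in the first step: that every multiple of $q$ is realized as the degree of a $(q,p)$-equivariant self-map of $\bS^{2m-1}$. This is folklore Hopf-type surgery on free orbits, but it is the step where keeping track of equivariance requires the most care. At small parameters it can be replaced by the explicit construction of a homotopically trivial $(4,2)$-map derived from \cite[Example 5.1]{jaworowski} referenced just above the proposition.
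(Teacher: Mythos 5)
Your construction is correct and is essentially the paper's own argument: the degree-$p^{m(k-1)}$ polar map, cancellation of the degree (a multiple of $q$) along a free orbit, extension of the resulting null-homotopic $h$ over the $q$ cones of $X=\bS^{2m-1}*\bZ/q$, and a Dold-style cell-by-cell equivariant map from the $2m$-dimensional free complex $(\bZ/q)^{*(2m+1)}$ into the $(2m-1)$-connected space $X$. The one step you flag as folklore --- shifting the degree of an equivariant self-map by any multiple of $q$ via surgery on a free orbit --- is exactly the step the paper also leaves as standard (``$f$ may be modified along its free orbits''), so nothing is missing relative to the source; the paper merely supplements this with an independent cohomological justification, the vanishing of the top Chern class $(p^{k-1}y)^{N/2}$ in $H^*(\bZ/p^k,\bZ)\cong\bZ[y]/(p^ky)$.
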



\section*{Acknowledgments}
We are deeply grateful to Robert Edwards for his pivotal correction. B.P. would also like
to thank his Ph.D. advisors John McCarthy and Xiang Tang, his postdoctoral mentors Baruch Solel and Orr Shalit, and Piotr M. Hajac for his hospitality at IM PAN.


\def\polhk#1{\setbox0=\hbox{#1}{\ooalign{\hidewidth
  \lower1.5ex\hbox{`}\hidewidth\crcr\unhbox0}}}
  \def\polhk#1{\setbox0=\hbox{#1}{\ooalign{\hidewidth
  \lower1.5ex\hbox{`}\hidewidth\crcr\unhbox0}}}

\end{document}